\def\sgn {\mathop{\rm sgn}}
\def\diag {\mathop{\rm diag}}
\def\R{\mathbb{R}}
\def\Z{\mathbb{Z}}
\def\N{\mathbb{N}}
\def\C{\mathbb{C}}
\def\calp{\mathcal{P}}
\def\ree{\mathop{\rm Re}}
\def\tr{\mathop{\rm tr}}
\newcommand{\bp}{\ensuremath{\mathbb P}}
\newcommand{\bpm}{\ensuremath{\mathbb P}^{[m]}}
\newcommand{\I}{\ensuremath{\widetilde{I}}}
\theoremstyle{plain}
\newtheorem{utheorem}{\textrm{\textbf{Theorem}}}
\theoremstyle{plain}
\newtheorem{theorem}{Theorem}[section]
\newtheorem{corollary}[theorem]{Corollary}
\newtheorem{proposition}[theorem]{Proposition}
\newtheorem{lemma}[theorem]{Lemma}
\theoremstyle{definition}
\newtheorem{definition}[theorem]{Definition}
\newtheorem{remark}[theorem]{Remark}
\theoremstyle{remark}
\numberwithin{equation}{section}
\begin{document}
\title{On fractional Hadamard powers of positive block matrices}
\author[Dominique Guillot \and Apoorva Khare \and Bala
Rajaratnam]{Dominique Guillot \and Apoorva Khare \and Bala Rajaratnam \\
Stanford University}
\address{Department of Mathematics, Stanford University, Stanford, CA -
94305}
\date{\today}
\subjclass[2010]{15B48 (primary); 15A42, 26A48, 39B32 (secondary)}

\begin{abstract}
Entrywise powers of matrices have been well-studied in the literature,
and have recently received renewed attention due to their application in
the regularization of high-dimensional correlation matrices. In this
paper, we study powers of positive semidefinite block matrices
$(H_{st})_{s,t=1}^n$ where each block $H_{st}$ is a complex $m \times m$
matrix. We first characterize the powers $\alpha \in \mathbb{R}$ such
that the blockwise power map $(H_{st}) \mapsto (H_{st}^\alpha)$ preserves
Loewner positivity.
The characterization is obtained by exploiting connections with the
theory of matrix monotone functions which was developed by C.~Loewner.
Second, we revisit previous work by D.~Choudhury
[Proc.~Amer.~Math.~Soc.~108] who had provided a lower bound on $\alpha$
for preserving positivity when the blocks $H_{st}$ pairwise commute. We
completely settle this problem by characterizing the full set of powers
preserving positivity in this setting.
Our characterizations generalize previous results by FitzGerald-Horn, Bhatia-Elsner, and Hiai from scalars to arbitrary block size, and in particular, generalize the Schur Product Theorem. Finally, a natural and unifying framework for studying the cases where
the blocks $H_{st}$ are diagonalizable consists of replacing real
powers by general characters of the complex plane. We thus classify such
characters, and generalize our results to this more general setting. In
the course of our work, given $\beta \in \mathbb{Z}$, we provide lower and upper
bounds for the threshold power $\alpha > 0$ above which the complex
characters $z = re^{i\theta} \mapsto r^\alpha e^{i \beta \theta}$
preserve positivity when applied entrywise to Hermitian positive
semidefinite matrices. In particular, we completely resolve the $n=3$
case of a question raised in 2001 by Xingzhi Zhan. As an application of
our results, we also extend previous work by de Pillis [Duke Math.~J.~36]
by classifying the characters $K$ of the complex plane for which the map
$(H_{st})_{s,t=1}^n \mapsto (K({\rm tr}(H_{st})))_{s,t=1}^n$ preserves Loewner
positivity.
\end{abstract}
\maketitle

\section{Introduction}

The study of positive definite matrices and of functions that preserve
them arises naturally in many branches of mathematics and other
disciplines. Given a function $f: \R \to \R$ and a matrix $A = (a_{st})$,
the matrix $f[A] := (f(a_{st}))$ is obtained by applying $f$ to the
entries of $A$. Such mappings are called \emph{entrywise} or
\emph{Hadamard functions} (see \cite[\S 6.3]{Horn_and_Johnson_Topics}).
Entrywise functions preserving Loewner positivity have been widely
studied in the literature (see e.g.~ Schoenberg \cite{Schoenberg42},
Rudin \cite{Rudin59}, Herz \cite{Herz63}, Horn \cite{Horn}, Christensen
and Ressel \cite{Christensen_et_al78}, Vasudeva \cite{vasudeva79},
FitzGerald, Micchelli, and Pinkus \cite{fitzgerald}, Hiai
\cite{Hiai2009}). The subject has recently received renewed attention due
to its importance in the regularization of high-dimensional
covariance/correlation matrices
\cite{Guillot_Rajaratnam2012, Guillot_Rajaratnam2012b, hero_rajaratnam,
Hero_Rajaratnam2012, Li_Horvath, Zhang_Horvath}.
An important family of functions is the set of power functions $f(x) =
x^\alpha$ for $\alpha > 0$. Characterizing the entrywise powers that
preserve positivity is a classical problem that has been well-studied in
the literature and is now completely resolved (see \cite{FitzHorn,
Bhatia-Elsner, Hiai2009, GKR-crit-2sided}).
A natural generalization of this problem consists of studying powers
preserving positivity when applied to block matrices (see
e.g.~\cite{Dipa_proc,Gunther,Lin20141}). More precisely, let $H :=
(H_{st})_{s,t=1}^n$ be an $mn \times mn$ Hermitian positive semidefinite
matrix, where each block $H_{st}$ is an $m \times m$ Hermitian positive
semidefinite matrix. Our first main result in this paper is a complete
characterization of the powers $\alpha$ such that the matrix
$(H_{st}^\alpha)_{s,t=1}^n$ is always positive semidefinite. Here, the
power $H_{st}^\alpha$ is computed using the spectral decomposition of
$H_{st}$.
Note that when each block of $H$ is $1 \times 1$, the problem reduces to
the classical problem of characterizing entrywise powers preserving
positivity. In contrast, when $H$ consists of only one block, every power
trivially preserves positive semidefiniteness. Surprisingly, we
demonstrate that except in trivial cases, powers do not preserve
positivity when the block size is $2$ or more. This sharply contrasts the
classical case where all powers preserve positivity beyond a certain {\it
critical exponent} (see e.g.~\cite{FitzHorn,Walch_survey}).

In a previous paper, Choudhury \cite{Dipa_proc} has studied powers
$\alpha > 0$ such that the map $(H_{st}) \mapsto (H_{st}^\alpha)$
preserves Loewner positivity, under the additional assumption that the
blocks $H_{st}$ pairwise commute. She demonstrates that every power
$\alpha \in \N \cup [mn-2, \infty)$ preserves Loewner positivity.
However, it is not clear if the bound $mn-2$ is sharp, nor which smaller
non-integer powers preserve positivity. In our second main result, we
completely answer these questions by showing that the set of powers
preserving positivity when the blocks commute is exactly $\N \cup [n-2)$.
In contrast to previous results, the answer turns out to be independent
of the block size $m$. Our result therefore shows that positivity is
actually retained at a much lower threshold (critical exponent) than was
previously thought.
We then extend this characterization to commuting Hermitian blocks that
are not necessarily positive semidefinite, by considering the odd and
even extensions of the power functions. Our characterization extends
previous work by FitzGerald and Horn \cite{FitzHorn}, Bhatia and Elsner
\cite{Bhatia-Elsner}, Hiai \cite{Hiai2009}, and Guillot, Khare, and
Rajaratnam \cite{GKR-crit-2sided}.

When studying powers of block matrices, one has to assume the blocks
$H_{st}$ are positive semidefinite for the powers $H_{st}^\alpha$ to be
well-defined. When the blocks are only Hermitian, it is natural to
replace the power functions by their odd or even extensions to $\R$ (see
Hiai \cite{Hiai2009}). Note that these functions are precisely the
Lebesgue measurable multiplicative functions on $\R$ (see
e.g.~\cite{GKR-measurable}). More generally, when the blocks $H_{st}$ are
only diagonalizable, it is natural to replace the power functions by
general Lebesgue measurable multiplicative functions on $\C$. Considering
such multiplicative functions provides a general and systematic framework
in which to study powers preserving Loewner positivity, either in the
block case, the commuting block case, or the traditional scalar setting
studied by FitzGerald and Horn, Bhatia and Elsner, and Hiai. Thus, in
Section \ref{Sprelim}, we classify all measurable multiplicative
functions on $\C$ that preserve $[0,\infty)$, and identify a natural
two-parameter family of functions $\{ \Psi_{\alpha, \beta} : \alpha \in
\R, \beta \in \Z \}$ that is used throughout the paper to generalize the
power functions.
Next, in Section \ref{Sblock} we characterize which of these functions
preserve Loewner positivity when applied blockwise to Hermitian positive
semidefinite matrices $(H_{st})_{s,t=1}^n$. In Section \ref{Scommute}, we
consider the case where the blocks $H_{st}$ pairwise commute, and
complete the characterization initiated by D.~Choudhury in
\cite{Dipa_proc}. We also demonstrate how our work can be used to
generalize previous work by de Pillis \cite{depillis_69}, by
characterizing the functions $\Psi_{\alpha,\beta}$ for which the map
$(H_{st})_{s,t=1}^n \mapsto (\Psi_{\alpha,
\beta}(\tr(H_{st})))_{s,t=1}^n$ preserves Loewner positivity.

Finally, in Section \ref{Sentrywise}, we consider the traditional setting
where each block is $1 \times 1$. For all integers $\beta \in \Z$ and $n
\in \N$, we provide lower and upper bounds for the threshold power
$\alpha > 0$ above which $\Psi_{\alpha, \beta}[-]$ preserves Loewner
positivity on $n \times n$ Hermitian positive semidefinite matrices. In
particular, when $\beta = 1$, we completely resolve the $n=3$ case of a
question raised in 2001 by Xingzhi Zhan \cite[Acknowledgment
Section]{Hiai2009}, concerning the powers $\alpha > 0$ for which
$\Psi_{\alpha, 1}[-]$ preserves Loewner positivity. Moreover, we study
the same problem for arbitrary $\beta$, which had not been previously
done in the literature. \medskip

\noindent\textbf{Notation:}
Given a subset $S \subset \C$, denote by $\bp_n(S)$ the set of $n \times
n$ Hermitian positive semidefinite matrices with entries in $S$. We
denote the complex disc centered at $a \in \C$ and of radius $R > 0$ by
$D(a,R)$. We write $A \geq 0$ to denote that $A \in \bp_n(\C)$, and write
$A \geq B$ when $A - B \in \bp_n(\C)$. We denote by $I_n$ the $n \times
n$ identity matrix, and by ${\bf 0}_{n \times n}$ and ${\bf 1}_{n \times
n}$ the $n \times n$ matrices with every entry equal to $0$ and $1$
respectively. Finally, we denote the conjugate transpose of a vector or
matrix $A$ by $A^*$. 

\section{Literature review}\label{Slit}

Entrywise powers and their properties have been studied by many authors
including Horn and FitzGerald \cite{FitzHorn}, Bhatia and Elsner
\cite{Bhatia-Elsner}, Hiai \cite{Hiai2009}, and Guillot, Khare, and
Rajaratnam \cite{GKR-crit-2sided}. Most of the known results concern
matrices with blocks of dimension $1 \times 1$. We now review two of the
most important results in the area. 

\begin{theorem}[FitzGerald and Horn, {\cite[Theorem
2.2]{FitzHorn}}]\label{TFitzHorn}
Suppose $A = (a_{st}) \in \bp_n((0,\infty))$ for some $n \geq 2$. Then
$A^{\circ\alpha} := (a_{st}^\alpha) \in \bp_n$ for all $\alpha \in \N
\cup [n-2,\infty)$. If $\alpha \in (0,n-2)$ is not an integer, then there
exists $A \in \bp_n((0,\infty))$ such that $A^{\circ\alpha} \notin
\bp_n$. More precisely, Loewner positivity is not preserved for $A = ((1
+ \epsilon st))_{s,t=1}^n$, for all sufficiently small $\epsilon =
\epsilon(\alpha,n) > 0$ for $\alpha \in (0,n-2) \setminus \N$.
\end{theorem}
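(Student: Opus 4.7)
The plan is to establish the two assertions of the theorem separately: preservation of positivity for $\alpha \in \N \cup [n-2,\infty)$, and construction of a counterexample for $\alpha \in (0,n-2) \setminus \N$.

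For the positive direction, the integer case $\alpha = k \in \N$ is immediate from iterated application of the Schur product theorem, since $A^{\circ k}$ is the $k$-fold Hadamard product of the PSD matrix $A$. For non-integer $\alpha \geq n-2$, I would proceed by induction on $n$. The base case $n = 2$ is clear since $\det A^{\circ \alpha} = (a_{11} a_{22})^\alpha - a_{12}^{2\alpha} \geq 0$ whenever $a_{11} a_{22} \geq a_{12}^2$. For the inductive step, set $v := (a_{s1})_{s=1}^n$ and $\tilde v := ((a_{s1})^\alpha)_{s=1}^n$, and apply the fundamental identity
\begin{equation*}
a^\alpha - b^\alpha = \alpha (a-b) \int_0^1 [(1-\lambda) b + \lambda a]^{\alpha - 1}\, d\lambda \qquad (a, b > 0)
\end{equation*}
with $a = a_{st}$ and $b = a_{s1} a_{t1}/a_{11}$ to obtain the decomposition
\begin{equation*}
A^{\circ \alpha} = \frac{1}{a_{11}^\alpha} \tilde v \tilde v^{\,T} + \alpha \int_0^1 M_\lambda^{\circ(\alpha - 1)} \circ \Delta\, d\lambda,
\end{equation*}
where $\Delta := A - a_{11}^{-1} v v^T$ and $M_\lambda := (1-\lambda) a_{11}^{-1} v v^T + \lambda A$. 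The first term is rank-one PSD. For the integrand, observe that $\Delta$ has vanishing first row and column (by Hermiticity of $A$), and its nontrivial $(n-1) \times (n-1)$ lower-right block is the Schur complement of $A$ with respect to $a_{11}$ and hence PSD; and $M_\lambda$ is PSD with positive entries as a convex combination of the PSD matrices $a_{11}^{-1} v v^T$ and $A$. Consequently $M_\lambda^{\circ(\alpha-1)} \circ \Delta$ also has vanishing first row and column, so positivity reduces to that of its lower-right block. The inductive hypothesis applied to the $(n-1) \times (n-1)$ lower-right block of $M_\lambda$ with exponent $\alpha - 1 \geq (n-1) - 2$ gives PSD-ness of the corresponding block of $M_\lambda^{\circ(\alpha - 1)}$; the Schur product theorem then yields PSD-ness of its Hadamard product with the Schur complement, closing the induction.

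For the sharpness direction, let $v := (1, 2, \ldots, n)^T$, so that $A_\epsilon := {\bf 1}_{n \times n} + \epsilon v v^T = (1 + \epsilon s t)_{s,t=1}^n$ is PSD with positive entries for $\epsilon > 0$. The binomial expansion yields
\begin{equation*}
A_\epsilon^{\circ \alpha} = \sum_{k=0}^\infty \binom{\alpha}{k} \epsilon^k v_k v_k^T, \qquad v_k := (1^k, 2^k, \ldots, n^k)^T.
\end{equation*}
Write $\alpha \in (k, k+1)$ with $k \in \{0, 1, \ldots, n-3\}$; then exactly one factor in the product $\alpha(\alpha - 1) \cdots (\alpha - (k+1))$ is negative, so $\binom{\alpha}{k+2} < 0$. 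Since the vectors $v_0, \ldots, v_{n-1}$ form a Vandermonde-type basis of $\R^n$ and $k + 2 \leq n - 1$, I can choose a unit vector $w \in \R^n$ orthogonal to $v_0, \ldots, v_{k+1}$ with $w \cdot v_{k+2} \neq 0$. Then
\begin{equation*}
w^T A_\epsilon^{\circ \alpha} w = \binom{\alpha}{k+2} \epsilon^{k+2} (w \cdot v_{k+2})^2 + O(\epsilon^{k+3}) < 0
\end{equation*}
for all sufficiently small $\epsilon > 0$, whence $A_\epsilon^{\circ \alpha} \notin \bp_n$.

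The main obstacle is the inductive step of the positive direction: finding a Schur-complement decomposition so that the drop in exponent ($\alpha \to \alpha - 1$) exactly matches the drop in effective dimension ($n \to n - 1$). The perfect match of these two reductions --- encoded in $(n-2) - 1 = (n-1) - 2$ --- is the structural reason the critical threshold is $n-2$, and recognizing the integrand as a Hadamard product of two PSD matrices of effective size $n-1$ is the key technical ingredient.
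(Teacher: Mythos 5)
Your proof is correct and reproduces the standard FitzGerald--Horn argument; the paper itself cites this result from \cite{FitzHorn} without reproving it. The integral/Schur-complement induction (exponent drop $\alpha \to \alpha - 1$ matching dimension drop $n \to n-1$) and the perturbative counterexample $A_\epsilon = {\bf 1}_{n \times n} + \epsilon v v^T$ with a vector $w$ orthogonal to the low-order moment vectors $v_0,\dots,v_{k+1}$ are precisely the devices the paper itself re-deploys, in the proof of Proposition \ref{P01} (explicitly following \cite[Lemma 2.1 and Theorem 2.2]{FitzHorn}) and in the proof of Theorem \ref{Tnew}(2), respectively.
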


Note that in Theorem \ref{TFitzHorn}, the entries of the matrix $A$ are
assumed to be positive for the power $x^\alpha$ to be well-defined. In
practice, one also commonly encounters matrices with negative and complex
entries. In order to work with matrices with real entries, the papers
\cite{Bhatia-Elsner, Hiai2009} considered the odd and even extensions of
the power functions to the real line.

\begin{definition}
Let $\alpha \in \R$. We define the even and odd extensions to $\R$ of the
power function $x \mapsto x^\alpha$ via:
\begin{equation}
\phi_\alpha(x) := |x|^\alpha, \qquad \psi_\alpha(x) := \sgn(x)
|x|^\alpha, \qquad \forall x \neq 0, 
\end{equation}
\noindent and $\phi_\alpha(0) =\psi_\alpha(0) := 0$. Also define
$f_\alpha(x) := x^\alpha$ for $x>0$, and $f_\alpha(0) := 0$.
\end{definition}

Note that the definitions of $\phi_\alpha, \psi_\alpha$ given above are
natural, as they yield the unique even and odd multiplicative extensions
to $\R$ of the standard power functions. The following result completely
characterizes the powers $\alpha$ such that $\phi_\alpha$ or
$\psi_\alpha$ preserves Loewner positivity when applied entrywise. The
reader is referred to \cite{GKR-crit-2sided} for a proof and history of
this result. 

\begin{theorem}[{Bhatia and Elsner \cite{Bhatia-Elsner}, Hiai
\cite{Hiai2009}, Guillot, Khare, and Rajaratnam
\cite{GKR-crit-2sided}}]\label{Tcrit}
Let $\alpha \in \R$ and let $n \geq 2$. Then 
\begin{enumerate}
\item $\phi_\alpha[A] \in \bp_n(\R)$ for all $A \in \bp_n(\R)$ if and
only if $\alpha \in 2\N \cup [n-2,\infty)$.
\item $\psi_\alpha[A] \in \bp_n(\R)$ for all $A \in \bp_n(\R)$ if and
only if $\alpha \in (-1+2\N) \cup [n-2,\infty)$.
\end{enumerate}

\noindent Moreover, if $f = \phi_\alpha$ or $f = \psi_\alpha$ does not
preserve positivity on $\bp_n(\R)$ for some $\alpha \in \R$, there exists
a rank $2$ matrix $A \in \bp_n(\R)$ such that $f[A] \not\in \bp_n(\R)$. 
\end{theorem}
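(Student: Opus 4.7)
The plan is to prove the characterization in both parts by separately establishing sufficiency and necessity. The ``easy half'' of sufficiency is immediate from the Schur product theorem: when $\alpha \in 2\N$, the function $\phi_\alpha(x) = |x|^\alpha$ coincides with $x^\alpha$ on all of $\R$, so $\phi_\alpha[A] = A^{\circ \alpha} \in \bp_n(\R)$ as an iterated Hadamard power of $A$. Similarly, when $\alpha \in -1 + 2\N$, we have $\psi_\alpha(x) = x^\alpha$ on $\R$ and $\psi_\alpha[A] = A^{\circ \alpha} \in \bp_n(\R)$. The principal sufficiency claim---that $\alpha \geq n-2$ implies $\phi_\alpha[A], \psi_\alpha[A] \in \bp_n(\R)$---is the hard direction and should reduce to Theorem \ref{TFitzHorn}. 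I would first rescale $A = DRD$ with $D = \operatorname{diag}(\sqrt{a_{ii}})$ to reduce to correlation matrices $R$ with unit diagonal and $|R_{st}| \leq 1$ (zero diagonal entries of $A$ force the corresponding row and column to vanish and are handled by restriction), and then show $|R|^{\circ\alpha}, \psi_\alpha[R] \in \bp_n$ for $\alpha \geq n-2$ by approximating $R$ by PSD matrices with strictly positive entries, applying Theorem \ref{TFitzHorn}, and passing to the limit.

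For necessity, I would construct rank-$2$ counterexamples in each forbidden range. For $\alpha \in (0,n-2) \setminus \N$, the FitzGerald--Horn test matrix $A_\epsilon := ((1 + \epsilon s t))_{s,t=1}^n$ has strictly positive entries for small $\epsilon > 0$, so $\phi_\alpha[A_\epsilon] = \psi_\alpha[A_\epsilon] = A_\epsilon^{\circ \alpha}$, and Theorem \ref{TFitzHorn} directly supplies failure by a rank-$2$ witness. For $\alpha$ a positive odd integer less than $n-2$ (in the $\phi_\alpha$ case) or a positive even integer less than $n-2$ (in the $\psi_\alpha$ case), one takes a rank-$2$ matrix $A = uu^T + vv^T$ with a suitable mix of positive and negative entries, so that $\phi_\alpha[A]$ or $\psi_\alpha[A]$ differs from $A^{\circ\alpha}$ exactly at the negative-entry positions; a principal-minor computation then exhibits the failure. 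For $\alpha < 0$, the rank-$2$ matrix $\begin{pmatrix} 1 & r \\ r & 1 \end{pmatrix} \oplus {\bf 0}_{(n-2)\times(n-2)}$ with $r \in (0,1)$ suffices, since its image under $\phi_\alpha$ or $\psi_\alpha$ has a $2 \times 2$ block with off-diagonal entry $r^\alpha > 1$, violating PSD. The case $\alpha = 0$ (in the range $n \geq 3$, where $0 \notin [n-2,\infty)$) is covered by a rank-$2$ matrix with a strategically placed zero off-diagonal entry, whose $\phi_0$-image has a negative $3 \times 3$ principal minor.

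The main technical obstacle is the sharpness of the sufficiency threshold $\alpha = n-2$. The naive identity $|x|^\alpha = (x^2)^{\alpha/2}$ combined with the Schur product theorem applied to $A^{\circ 2}$ (which has nonnegative entries) and Theorem \ref{TFitzHorn} only yields $\alpha \geq 2(n-2)$, which is too weak. Obtaining the tight bound $n-2$ is the central difficulty and requires a more refined reduction---either exploiting the correlation-matrix structure above, or using an integral representation of $|x|^\alpha$ in terms of positivity-preserving kernels---and pinning down this step is the technical crux of the argument.
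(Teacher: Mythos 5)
First, note that the paper does not actually prove Theorem~\ref{Tcrit}: it explicitly defers the proof to \cite{GKR-crit-2sided}, so there is no internal argument to compare your proposal against. Evaluated on its own merits, the easy parts of your plan are sound --- the Schur product theorem handles $\alpha \in 2\N$ (for $\phi_\alpha$) and $\alpha \in -1+2\N$ (for $\psi_\alpha$), the FitzGerald--Horn matrix $(1+\epsilon st)_{s,t=1}^n$ is rank $2$ and supplies the counterexample for $\alpha \in (0,n-2)\setminus\N$, the $2\times 2$ block kills $\alpha < 0$, and the zero-patterned rank-$2$ matrix (cf.\ Equation~\eqref{Etop3}) kills $\alpha = 0$ for $n \geq 3$. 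These are all correct and, taken together, do yield rank-$2$ witnesses as the final clause of the theorem requires.

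The problem is the sufficiency direction $\alpha \geq n-2$, which you correctly identify as the crux but then propose an approach that cannot work. You suggest ``approximating $R$ by PSD matrices with strictly positive entries, applying Theorem~\ref{TFitzHorn}, and passing to the limit.'' If $R$ has a strictly negative entry $r_{st} < 0$, then no sequence of matrices with strictly positive $(s,t)$-entries converges to $R$ entrywise, so the approximation you describe does not exist; and replacing $R$ by $|R| := (|r_{st}|)$ is also unavailable since $|R|$ need not be positive semidefinite. So this route is not a gap to be filled in later --- it is structurally blocked. The actual argument of Bhatia--Elsner and Hiai (and of \cite{GKR-crit-2sided}) is an induction on $n$ via the FitzGerald--Horn integral identity
\[
f(x) = f(y) + \int_0^1 f'\bigl(\lambda x + (1-\lambda) y\bigr)(x-y)\, d\lambda,
\]
applied entrywise with $y$ given by the rank-one compression $\zeta\zeta^T$ onto the last column. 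The key observation you are missing is that $\phi_\alpha' = \alpha\,\psi_{\alpha-1}$ and $\psi_\alpha' = \alpha\,\phi_{\alpha-1}$, so the inductive step simultaneously lowers $\alpha$ and $n$ by one (preserving $\alpha \geq n-2$) while toggling between the even and odd extensions; the base case $n=2$ is trivial. This is precisely the mechanism the paper itself generalizes to the complex power functions $\Psi_{\alpha,\beta}$ in Proposition~\ref{P01}, where the derivative computation shows $\frac{d}{d\lambda}\Psi_{\alpha,0}(z_\lambda) = \alpha\,\mathrm{Re}(\Psi_{\alpha-1,1}(z_\lambda)\,\overline{z-w})$. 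You should replace your approximation paragraph with this induction.
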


Blockwise powers yield a generalization of the entrywise powers analysis
studied above. We now recall a sufficient condition for preserving
positivity that was shown in \cite{Dipa_proc} in the case where $H =
(H_{st})$ is a block matrix with commuting blocks $H_{st}$.

\begin{theorem}[{Choudhury, \cite[Theorem 5]{Dipa_proc}}]\label{Tdipa}
Let $H = (H_{st})$ be a given positive semidefinite $mn \times mn$
matrix,  where $\{H_{st}: 1 \leq s,t \leq n\}$ are a commuting family of
normal $m \times m$ matrices. If $H$ is positive semidefinite, then so is
$(H_{st}^\alpha)$ for all $\alpha \in \N$. If in addition each $H_{st}$
is positive semidefinite, then $(H_{st}^\alpha)$ is positive semidefinite
for all real $\alpha \geq mn-2$. 
\end{theorem}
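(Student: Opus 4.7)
The plan is to exploit the fact that a commuting family of normal matrices is simultaneously unitarily diagonalizable, thereby reducing the blockwise power question to an entrywise power question on the full $mn \times mn$ matrix, to which the Schur product theorem and Theorem \ref{TFitzHorn} can be applied.

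First, choose a unitary $U \in \C^{m \times m}$ such that $D_{st} := U^* H_{st} U$ is diagonal for every $s,t$, which is possible since $\{H_{st}\}$ is a commuting family of normal matrices. Set $\widetilde{U} := I_n \otimes U$, which is unitary on $\C^{mn}$, and observe that conjugation by $\widetilde{U}$ transforms $H$ into the block matrix $K := (D_{st})_{s,t=1}^n$, which is positive semidefinite of size $mn \times mn$. Since each $D_{st}$ is diagonal, the matrix power $D_{st}^\alpha$ coincides (with the convention $0^\alpha := 0$) with the entrywise power $D_{st}^{\circ \alpha}$; hence the conjugated block matrix $\widetilde{U}^* (H_{st}^\alpha)_{s,t=1}^n \widetilde{U} = (D_{st}^\alpha)_{s,t=1}^n$ is precisely the Hadamard power $K^{\circ \alpha}$. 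Thus it suffices to prove $K^{\circ \alpha} \in \bp_{mn}(\C)$.

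For $\alpha \in \N$, iterated application of the Schur product theorem to the positive semidefinite matrix $K$ immediately yields $K^{\circ \alpha} \in \bp_{mn}(\C)$, establishing the first assertion. For the second assertion, assume in addition that each $H_{st}$ is positive semidefinite, so that each $D_{st}$ has nonnegative real diagonal and $K$ has entries in $[0,\infty)$. For every $\epsilon > 0$ the perturbation $K_\epsilon := K + \epsilon {\bf 1}_{mn \times mn}$ is positive semidefinite (as a sum of two such matrices) with entries strictly in $(0,\infty)$, so Theorem \ref{TFitzHorn} yields $K_\epsilon^{\circ \alpha} \in \bp_{mn}(\C)$ for every $\alpha \geq mn - 2$. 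Letting $\epsilon \to 0^+$, the entrywise continuity of $x \mapsto x^\alpha$ on $[0,\infty)$ combined with the closedness of the positive semidefinite cone yields $K^{\circ \alpha} \in \bp_{mn}(\C)$.

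The main subtlety is the approximation step: because the $D_{st}$ are diagonal, the matrix $K$ is riddled with zero entries, whereas Theorem \ref{TFitzHorn} requires strictly positive entries. Introducing the rank-one perturbation $\epsilon {\bf 1}_{mn \times mn}$ (which preserves positive semidefiniteness) and taking a limit circumvents this; the rest of the reduction is purely algebraic. Note that the threshold $mn - 2$ arises simply because Theorem \ref{TFitzHorn} is being applied to the full $mn \times mn$ matrix $K$, and the authors' later improvement to $n - 2$ in Section \ref{Scommute} will presumably come from exploiting the finer structure of $K$ more carefully.
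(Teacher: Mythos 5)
Your proof is correct and takes the same approach that the paper uses (in its proof of Theorem \ref{Tmain2}, which refines this result): simultaneously unitarily diagonalize the commuting family of normal blocks, thereby reducing the blockwise power to the entrywise Hadamard power $K^{\circ\alpha}$ of the conjugated matrix $K = (U^* H_{st} U)_{s,t}$, and then invoke the Schur product theorem (for $\alpha \in \N$) and FitzGerald--Horn with a $\epsilon {\bf 1}$-perturbation limit (for real $\alpha \geq mn-2$). You also correctly anticipate the refinement: the paper's Theorem \ref{Tmain2} further permutes $K$ into a direct sum of $m$ smaller $n \times n$ blocks $A_1 \oplus \cdots \oplus A_m$ (each holding one eigenvalue from every $H_{st}$), applies FitzGerald--Horn to each $A_k$ separately, and thereby lowers the threshold from $mn-2$ to $n-2$.
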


In Section \ref{Sblock} we completely characterize the powers $\alpha$
that preserve positivity when the blocks do not necessarily commute. We
then show in Section \ref{Scommute} that the bound $\alpha \geq mn-2$ in
Theorem \ref{Tdipa} is not sharp and that the optimal bound is $\alpha
\geq n-2$. Moreover, we will demonstrate how Theorem \ref{Tdipa} can be
naturally extended to blocks $H_{st}$ that are diagonalizable.

\section{Preliminaries and main results}\label{Sprelim}

Before we proceed to characterize functions preserving Loewner positivity
for block matrices, we provide a framework in which to work with powers
of complex matrices. In order to do so, first note that the functions
$\phi_\alpha$ and $\psi_\alpha$ defined in Section \ref{Slit} are in fact
the unique non-constant Lebesgue measurable multiplicative functions on
$\R$ (see e.g.~\cite{GKR-measurable}). Since we work with complex
matrices in the present paper, it is natural to first classify the
multiplicative maps on the complex plane under mild measurability
assumptions. Such a classification has been achieved in related work
\cite{GKR-measurable}.

\subsection{Multiplicative maps on the complex plane}\label{SSmult}

Given $\alpha, \beta \in \R$, define $\Psi_{\alpha,\beta} : \C\to \C$ by: 
\begin{equation}
\Psi_{\alpha,\beta}(r \exp(i \theta)) := r^\alpha \exp(i \beta \theta) \
\forall r>0, \theta \in (-\pi,\pi], \qquad \Psi_{\alpha,\beta}(0) := 0.
\end{equation}

\noindent When $\beta \in \Z$, the maps $\Psi_{\alpha,\beta}$ are
multiplicative on $\C$ and continuous on the unit circle $S^1 := \{z \in
\C: |z| = 1\}$. Moreover, $(\alpha,\beta) \mapsto \Psi_{\alpha,\beta}$ is
a monoid homomorphism from the additive group $(\R \times \Z,+)$ to the
monoid of multiplicative maps on $\C$ (under pointwise multiplication).
The following lemma shows that the functions $\Psi_{\alpha, \beta}$ for
$\alpha \in \R$ and $\beta \in \Z$ are in fact the only non-constant
multiplicative functions from $\C$ to $\C$ that 1) are continuous on
$S^1$, 2) map the positive real axis into itself (needed to preserve
Loewner positivity), and 3) satisfy natural measurability conditions. 

\begin{lemma}\label{LCmult}
Given $R \in (1,\infty]$ and $K : D(0,R) \to \C$, the following are
equivalent.
\begin{enumerate}
\item $K$ is multiplicative on $D(0,R)$, continuous on $S^1 \subset
D(0,R)$, sends $\I := (0,R)$ to $\R$, and is Lebesgue measurable on some
subinterval $I \subset \I$ which contains $1$.

\item Either $K \equiv 0$ or $K \equiv 1$ on $D(0,R)$, or there exist
$\alpha \in \R$ and $\beta \in \Z$ such that $K \equiv
\Psi_{\alpha,\beta}$.
\end{enumerate}

\noindent Moreover, the maps $\{ \Psi_{\alpha,\beta} : \alpha \in \R,
\beta \in \Z \} \cup \{ K \equiv 1 \}$ are linearly independent as
functions on $D(0,r)$ for any $0 < r \leq \infty$.
\end{lemma}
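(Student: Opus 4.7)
The plan is to establish (1) $\Leftrightarrow$ (2) by decoupling the analysis of $K$ on the positive real axis $\I = (0,R)$ and on the unit circle $S^1$, and then reassembling via multiplicativity. The direction (2) $\Rightarrow$ (1) is a routine check: for $\beta \in \Z$ the map $\Psi_{\alpha,\beta}$ is well-defined independently of the representative $\theta \in (-\pi,\pi]$ modulo $2\pi$ (this is precisely why $\beta$ must be integral), is multiplicative on $\C$, continuous on $\C^\times$ (and thus on $S^1$), sends $\I$ into $\R$, and is Lebesgue measurable everywhere; the constants $K \equiv 0$ and $K \equiv 1$ are trivially multiplicative.

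For (1) $\Rightarrow$ (2), I first observe that $K(1) = K(1)^2 \in \{0,1\}$. If $K(1) = 0$ then $K(z) = K(z) K(1) = 0$ for all $z$, giving $K \equiv 0$. Assume henceforth $K(1) = 1$ and restrict $K$ to $\I$. Since $K(r) = K(\sqrt{r})^2 \geq 0$ and $K$ is non-vanishing in a multiplicative neighborhood of $1$, the function $x \mapsto \log K(e^x)$ is a Cauchy additive function on a neighborhood of $0$ which is Lebesgue measurable (inherited from $I \ni 1$); the classical Sierpi\'nski--Banach theorem then forces it to be linear, yielding $K(r) = r^\alpha$ on $\I$ for some $\alpha \in \R$. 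The multiplicative identity $K(r) = K(r/s) K(s)$ for $r/s, s \in I$ extends this formula from $I$ to all of $\I$.

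Next, restrict $K$ to $S^1$. Continuity and multiplicativity force $K(S^1)$ to be a compact subgroup of $\C^\times$, hence contained in $S^1$; the continuous characters $S^1 \to S^1$ are precisely $z \mapsto z^\beta$ for $\beta \in \Z$, so $K(e^{i\theta}) = e^{i\beta\theta}$ for some integer $\beta$. Combining via $K(re^{i\theta}) = K(r) K(e^{i\theta})$ gives $K(z) = \Psi_{\alpha,\beta}(z)$ for all $z \neq 0$. The relation $K(0) = K(0) K(z)$ evaluated at any $z$ with $K(z) \neq 1$ forces $K(0) = 0$, so $K \equiv \Psi_{\alpha,\beta}$; the degenerate subcase $K(z) = 1$ for all $z \neq 0$ produces the remaining two options $K \equiv 1$ or $K \equiv \Psi_{0,0}$ depending on the value of $K(0)$.

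For the linear independence, suppose
\[
c_0 \cdot 1 + \sum_{j=1}^N c_j \Psi_{\alpha_j,\beta_j}(z) = 0 \qquad \text{for all } z \in D(0,r),
\]
where the pairs $(\alpha_j,\beta_j) \in \R \times \Z$ are distinct. Setting $z = 0$ immediately yields $c_0 = 0$ since every $\Psi_{\alpha,\beta}$ vanishes at the origin. On a ray $z = s e^{i\theta}$ with $s \in (0,r)$, the identity reads $\sum_j c_j s^{\alpha_j} e^{i\beta_j \theta} = 0$; dividing by $s^{\alpha_{\min}}$ and letting $s \to 0^+$ isolates $\sum_{\alpha_j = \alpha_{\min}} c_j e^{i\beta_j \theta} = 0$ for every $\theta \in (-\pi,\pi]$, and linear independence of the distinct characters $\theta \mapsto e^{i\beta_j\theta}$ kills those coefficients; induction on $N$ finishes the argument. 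The main technical hurdle is promoting local measurability on $I$ to enough structure for Sierpi\'nski--Banach to apply, which is handled by the identity $K(r) = K(\sqrt{r})^2 \geq 0$ propagating positivity of $K$ across $\I$ and making the logarithmic reformulation legitimate.
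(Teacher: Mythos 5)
Your proof is correct, and it is a genuinely different route from the paper's. The paper's argument is a two-sentence reduction: it observes that $K$ restricted to $S^1$ is a continuous character, deduces that $K$ is conjugation-equivariant on $D(0,R)$, and then cites Theorem~8 of the companion paper \cite{GKR-measurable} for the full classification, so essentially all of the analytic content is outsourced. You instead give a self-contained argument: after disposing of the trivial cases via $K(1) \in \{0,1\}$, you decouple the polar decomposition into a radial factor on $\I$ and an angular factor on $S^1$. The radial factor is handled by observing $K = K(\sqrt{\cdot})^2 \geq 0$ and non-vanishing on $\I$, taking logarithms, and reducing to the Cauchy functional equation, where the Sierpi\'nski--Banach/Steinhaus theorem (measurable additive $\Rightarrow$ linear) yields $K(r) = r^\alpha$; the angular factor is a continuous character of the compact group $S^1$, hence $z \mapsto z^\beta$ with $\beta \in \Z$. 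Your linear-independence argument -- evaluate at $0$ to kill the constant, restrict to a ray $z = s e^{i\theta}$, divide by $s^{\alpha_{\min}}$, send $s \to 0^+$ to isolate the terms of minimal radial order, then use orthogonality of the characters $e^{i\beta_j\theta}$ and induct -- is clean, works for any radius $r > 0$, and is independent of whatever the cited reference does. What the paper's approach buys is brevity and avoidance of re-deriving standard facts; what yours buys is a transparent, stand-alone argument that makes explicit the two classical ingredients (the measurable Cauchy FE and Pontryagin duality for $S^1$) driving the classification.
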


\begin{proof}[Proof of Lemma \ref{LCmult}]
Note that $K : S^1 \to \C$ is multiplicative and continuous, hence a
character. Therefore $K : D(0,R) \to \C$ is multiplicative and
conjugation-equivariant. The result now follows from \cite[Theorem
8]{GKR-measurable}.
\end{proof}

\subsection{Main results}

Before stating the main results of the paper, we introduce some notation.
Let $S \subset \C$ and $f: S \to \C$. Given a complex diagonalizable
matrix $A$ with eigen-decomposition $A = P^{-1} D P$ and spectrum
contained in $S$, we denote by $f(A)$ the matrix $f(A) = P^{-1} f(D) P$
where $f(D)$ denotes the diagonal matrix with diagonal $f(d_{11}), \dots,
f(d_{nn})$. We denote by $\bpm_{mn}(S)$ the subset of block matrices $H =
(H_{st})_{s,t=1}^n \in \bp_{mn}(\C)$ where each block $H_{st}$ is an $m
\times m$ diagonalizable matrix with spectrum contained in $S$. Note that
when $m=1$, the set $\bpm_{mn}(S)$ reduces to $\bp_n(S)$. Given $H =
(H_{st})_{s,t=1}^n \in \bpm_{mn}(S)$, we define
\begin{equation}
f^{[m]}[H] := (f(H_{st}))_{s,t=1}^n.
\end{equation}

\noindent When $m=1$, $f^{[m]}[A]$ reduces to $f[A]$. Using this
notation, we can now state the main results of the paper.

Recall that by Theorem \ref{TFitzHorn}, a power function $x^\alpha$
preserves positivity when applied entrywise to all $n \times n$ symmetric
positive semidefinite matrices with positive entries, if and only if
$\alpha \geq n-2$ or $\alpha \in \N$. Our first main result shows that,
surprisingly, the situation is radically different when the blocks have
size greater than $1$. 

\begin{utheorem}\label{Tmain1}
Let $\beta \in \Z$ and let $m,n \geq 2$.
\begin{enumerate}
\item Given $\alpha > 0$, the matrix $f_\alpha^{[m]}[(H_{st})] =
(H_{st}^\alpha) \in \bp_{mn}(\C)$ for all $(H_{st}) \in
\bpm_{mn}([0,\infty))$, if and only if $\alpha = 1$.
If $\alpha \leq 0$, then $f_0^{[m]}[-]$ preserves positivity on
$\bpm_{mn}((0,\infty))$ if and only if $\alpha = 0$.

\item The functions $\phi_\alpha^{[m]}[-]$ do not preserve positivity on
$\bpm_{mn}(\R)$ for any $\alpha \in \R$.

\item For $\alpha \in \R$, the functions $\psi_\alpha^{[m]}[-]$ preserve
positivity on $\bpm_{mn}(\R)$ if and only if $\alpha = 1$.

\item For $\alpha \in \R$, the functions $\Psi_{\alpha, \beta}^{[m]}[-]$
preserve positivity on $\bpm_{mn}(\C)$ if and only if $\alpha = 1$ and
$\beta = \pm1$ -- i.e., $\Psi_{\alpha,\beta}(z) \equiv z$ or
$\overline{z}$.
\end{enumerate}
\end{utheorem}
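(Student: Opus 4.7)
The plan is to prove part~(1) as the core statement and deduce parts~(2)--(4) from it by a restriction argument. The common feature is that every $H \in \bpm_{mn}([0,\infty))$ has Hermitian positive semidefinite blocks, and on such blocks the spectral prescriptions of $\phi_\alpha$, $\psi_\alpha$, and $\Psi_{\alpha,\beta}$ all agree with $f_\alpha$; so any counterexample produced for part~(1) transports to parts~(2), (3), (4), and only a small number of residual cases remain in the later parts.

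For part~(1) I would work in the smallest case $m = n = 2$ with the rank-degenerate construction
\[
A := \diag(2,1), \qquad B := I_2 + \tfrac{1}{2}(E_{12}+E_{21}), \qquad C := BA^{-1}B, \qquad H := \begin{pmatrix} A & B \\ B & C \end{pmatrix},
\]
in which $A$ and $B$ are Hermitian positive definite but do not commute. The factorization
\[
H = \begin{pmatrix} I_m \\ BA^{-1} \end{pmatrix} A \begin{pmatrix} I_m & A^{-1}B \end{pmatrix}
\]
certifies $H \geq 0$ with vanishing Schur complement. Applying $f_\alpha^{[m]}$ yields a block matrix whose positive semidefiniteness is equivalent to
\[
S_\alpha := (BA^{-1}B)^\alpha - B^\alpha A^{-\alpha} B^\alpha \geq 0,
\]
and $S_1 = 0$. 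A direct spectral computation at $\alpha = 2$ and $\alpha = 1/2$ produces strictly negative diagonal entries in $S_\alpha$ for this $A, B$. To extend the conclusion to every $\alpha \neq 1$, I combine real analyticity of $\alpha \mapsto S_\alpha$ with a first-order perturbation analysis at $\alpha = 1$: using $\frac{d}{d\alpha}X^\alpha = X^\alpha \log X$ one writes down $\partial_\alpha S_\alpha|_{\alpha=1}$ explicitly, and the noncommutativity of $A, B$ makes this a nonzero Hermitian matrix with eigenvalues of opposite sign, so the double zero of $S_1$ splits into one positive and one negative direction in a punctured neighborhood of $\alpha = 1$. Combined with the two spot-checks this gives $S_\alpha \not\geq 0$ throughout $(-\infty,0)\cup(0,1)\cup(1,\infty)$. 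For $\alpha \leq 0$ the only preserving value is $\alpha = 0$, since on invertible blocks $f_0(H_{st}) = I_m$ and $f_0^{[m]}[H] = {\bf 1}_{n\times n}\otimes I_m \geq 0$.

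Parts~(2)--(4) now follow quickly. For (2), the restriction eliminates every $\alpha \neq 1$; to rule out $\alpha = 1$ take
\[
H := \begin{pmatrix} \diag(1,\varepsilon) & \sigma_x \\ \sigma_x & \diag(1/\varepsilon,1) \end{pmatrix}, \qquad \sigma_x := \begin{pmatrix} 0 & 1 \\ 1 & 0 \end{pmatrix},
\]
which has zero Schur complement and so lies in $\bpm_{2\cdot 2}(\R)$, while $\phi_1^{[m]}[H]$ replaces $\sigma_x$ by $|\sigma_x| = I_2$ and has Schur complement $\diag(1/\varepsilon - 1,\, 1 - 1/\varepsilon)$, indefinite for $\varepsilon \in (0,1)$. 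Part~(3) is immediate because $\psi_1 = \mathrm{id}_{\R}$. For part~(4), the restriction to $\bpm_{mn}([0,\infty))$ forces $\alpha = 1$; among $\{\Psi_{1,\beta} : \beta \in \Z\}$, the maps $\Psi_{1,\pm 1}$ are verified to preserve positivity using conjugation-equivariance (so they reduce to the identity on the Hermitian blocks responsible for the Schur-complement condition). For $\beta \notin \{\pm 1\}$, take an off-diagonal block $B$ with complex-conjugate eigenvalues $re^{\pm i\theta}$; the eigenvalue-rotation factor induced by $\Psi_{1,\beta}$ inflates the off-diagonal entry of the triangularized form of $B$ by $\sin(\beta\theta)/\sin\theta$, and for a careful choice of $\theta$ this enlarges $B^* A^{-1} B$ beyond $C$, destroying positivity.

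The main obstacle is the uniform $\alpha$-treatment in part~(1): the individual spot-checks at $\alpha \in \{1/2, 2\}$ are mechanical, but the joint statement requires either a genuine one-parameter family of counterexamples or an analyticity-plus-perturbation argument that is not entirely trivial, since one must confirm that the smallest eigenvalue of $S_\alpha$ stays strictly negative on $\R\setminus\{1\}$. A cleaner alternative would be to introduce a small noncommutativity parameter $\varepsilon$, scale $A, B \to I + \varepsilon A',\, I + \varepsilon B'$, and isolate the leading-order-in-$\varepsilon$ obstruction uniformly in $\alpha \neq 1$.
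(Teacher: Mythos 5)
Your proposal correctly identifies the Schur-complement reduction ($f^{[m]}_\alpha$ preserving positivity on a rank-degenerate $2\times 2$ block matrix is equivalent to $(BA^{-1}B)^\alpha \geq B^\alpha A^{-\alpha}B^\alpha$) and the restriction principle linking parts (2)--(4) to part (1). But the central step --- establishing the failure for \emph{every} $\alpha \notin\{0,1\}$ --- is not carried out, and you acknowledge this yourself. Spot-checks at $\alpha = 1/2$ and $\alpha=2$ plus a first-order perturbation at $\alpha=1$ say nothing about $\alpha = 0.1$ or $\alpha = 100$; a claim that ``the smallest eigenvalue of $S_\alpha$ stays strictly negative on $\R\setminus\{1\}$'' is precisely what needs proof and is not supplied. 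The paper sidesteps this with two ideas you are missing. First, Lemma \ref{Lmonotone} converts blockwise positivity preservation into $m$-matrix monotonicity of $f_\alpha$, and Hansen's Loewner-matrix criterion (Theorem \ref{THansen}) then disposes of $\alpha>1$ and $\alpha<0$ in a uniform, computation-free way (the $2\times 2$ Loewner determinant tends to $-\infty$). Second, for the harder range $\alpha\in(0,1)$, where $x^\alpha$ \emph{is} operator monotone and the monotonicity reduction gives nothing, the paper uses a composition trick: if $f_\alpha^{[2]}$ preserved positivity, so would $(f_\alpha^{[2]})^{\circ k} = f_{\alpha^k}^{[2]}$; but $\alpha^k\to 0$, and an explicit matrix $M$ is exhibited with $\det f_\gamma^{[2]}[M]<0$ for all small $\gamma>0$. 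This iteration argument is the key insight your outline lacks, and without it part (1) is not proved.

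There is a second, more elementary gap. You assert that the restriction to $\bpm_{mn}([0,\infty))$ ``eliminates every $\alpha \ne 1$'' in parts (2)--(4); in fact it eliminates only $\alpha\notin\{0,1\}$, because $f_0^{[m]}$ \emph{does} preserve positivity on $\bpm_{mn}((0,\infty))$ and $\phi_0,\psi_0,\Psi_{0,\beta}$ all agree with $f_0$ there. So the $\alpha=0$ case for $\phi_\alpha$, $\psi_\alpha$, and $\Psi_{0,\beta}$ requires blocks with $0$ or negative eigenvalues (where these functions no longer agree with $f_0$), and your proposal does not treat it; the paper does so with a separate construction $A(c)$ built on the non-normal block $B = E_{21}+E_{22}$. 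Your $\alpha=1$ counterexample for $\phi_1$ (using $\sigma_x$ and $\diag(1,\varepsilon)$) is a valid, clean alternative to the paper's numerical $4\times4$ example. Finally, the part (4) argument for $\beta\notin\{\pm1\}$ is only a sketch: the ``inflation factor $\sin(\beta\theta)/\sin\theta$'' heuristic is plausible and close in spirit to the paper's $c\to 0^-$ limit computation on the matrix $M(a,a,c)$, but you neither fix the ambient block matrix $H$ nor verify which principal minor turns negative, so this part is also incomplete as written.
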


A natural relaxation of the hypothesis in Theorem \ref{Tmain1} is to
assume that the blocks $H_{st}$ all commute with each other. Powers
preserving positivity when applied to block matrices where the blocks
commute have been studied by D.~Choudhury in \cite{Dipa_proc}. It is
natural to ask if the lower bound $\alpha \geq mn-2$ in Theorem
\ref{Tdipa} is sharp, or if other powers preserve positivity. We
completely settle this question in our second main result, Theorem
\ref{Tmain2}, by showing that the {\it critical exponent} is in fact
$\alpha = n-2$ and that smaller non-integer powers do not preserve
Loewner positivity. In Section \ref{Scommute} we also consider the
analogue of Theorem \ref{Tmain2} where the blocks are complex
diagonalizable. 

\begin{utheorem}\label{Tmain2}
Let $\alpha > 0$ and $m,n \geq 2$. Then $(H_{st}^\alpha) \in
\bp_{mn}(\C)$ for all $(H_{st})_{s,t=1}^n \in \bp_{mn}(\C)$ such that
$H_{st} \in \bp_m(\C)$ and the blocks $H_{st}$ commute, if $\alpha \in \N
\cup [n-2,\infty)$.
If $\alpha \not\in \N \cup [n-2,\infty)$, there exist matrices $H_{st}
\in \bp_m(\C)$ such that $(H_{st}) \in \bp_{mn}(\C)$, the blocks $H_{st}$
commute, but $(H_{st}^\alpha)$ is not positive semidefinite.
Moreover, if $\alpha < 0$, there exist real symmetric positive definite
matrices $H_{st}$, $s,t = 1, \dots, n$ such that $(H_{st})_{s,t=1}^n \in
\bp_{mn}(\R)$, but $(H_{st}^\alpha)$ is not positive semidefinite. 
\end{utheorem}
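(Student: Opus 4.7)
The plan is to reduce the block problem to the scalar entrywise problem solved by Theorem \ref{TFitzHorn} by exploiting simultaneous unitary diagonalization of the commuting family $\{H_{st}\}$.

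First, since the $H_{st}$ are positive semidefinite (hence normal) and pairwise commute, the spectral theorem for commuting normal matrices provides a single unitary $U$ with $H_{st} = U D_{st} U^*$ and each $D_{st} = \diag(d^{(1)}_{st}, \ldots, d^{(m)}_{st})$ nonnegative diagonal. I would then conjugate by $I_n \otimes U$, which preserves Loewner positivity: $(H_{st})$ lies in $\bp_{mn}(\C)$ iff $(D_{st})$ does, and $(H_{st}^\alpha) = (U D_{st}^\alpha U^*)$ is positive semidefinite iff $(D_{st}^\alpha)$ is. A further permutation similarity grouping coordinates by the diagonal-position index $k \in \{1, \ldots, m\}$ should transform $(D_{st})_{s,t=1}^n$ into the block diagonal matrix $\bigoplus_{k=1}^m A_k$, where $A_k := (d^{(k)}_{st})_{s,t=1}^n \in \bp_n([0,\infty))$. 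After this reduction, the original block matrix is positive semidefinite iff every $A_k$ is positive semidefinite, and the $\alpha$-powered block matrix is positive semidefinite iff every entrywise power $A_k^{\circ \alpha}$ is positive semidefinite.

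Sufficiency for $\alpha \in \N \cup [n-2, \infty)$ would then follow immediately by applying Theorem \ref{TFitzHorn} to each $A_k$ (using the Schur product theorem directly for integer $\alpha$, and for non-integer $\alpha \geq n-2$ using a perturbation $A_k + \epsilon\, {\bf 1}_{n \times n}$ followed by $\epsilon \to 0^+$ to handle the possibly vanishing entries). For necessity, I would lift scalar counterexamples via $H_{st} := a_{st} I_m$: these blocks are real positive definite when $a_{st} > 0$, pairwise commute, and satisfy $(H_{st}) = A \otimes I_m$ and $(H_{st}^\alpha) = A^{\circ \alpha} \otimes I_m$. For $\alpha \in (0, n-2) \setminus \N$, the matrix $A = (1 + \epsilon st)_{s,t=1}^n$ from Theorem \ref{TFitzHorn} furnishes the required scalar counterexample. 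For $\alpha < 0$, I would take $A = (1-c) I_n + c\, {\bf 1}_{n \times n}$ with $c \in (0,1)$: this $A$ is positive definite, while its $\alpha$-th Hadamard power $(1 - c^\alpha) I_n + c^\alpha\, {\bf 1}_{n \times n}$ has the negative eigenvalue $1 - c^\alpha$ since $c^\alpha > 1$.

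The argument has no deep obstacle: its core is the clean reduction of the block powering question to $m$ independent copies of the scalar question via simultaneous diagonalization. The step requiring the most care is the verification of the permutation similarity bringing $(D_{st})_{s,t}$ to $\bigoplus_k A_k$; once this rearrangement is established, the theorem follows as a direct consequence of the previously developed entrywise theory.
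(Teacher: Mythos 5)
Your argument is essentially identical to the paper's: simultaneously diagonalize the commuting blocks by a single unitary, permute to split $(D_{st})$ into $\bigoplus_{k=1}^m A_k$ with $A_k \in \bp_n([0,\infty))$, invoke Theorem~\ref{TFitzHorn} for sufficiency, and lift scalar counterexamples via $H_{st} = a_{st}\, I_m$ (which is exactly the paper's $\Lambda := P\,A^{\oplus m}\,P^{-1}$ construction written as $A \otimes I_m$). Your $\epsilon$-perturbation to handle possible zero entries in the $A_k$ is a small extra care the paper's proof omits, and it is correct.
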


In our third main result, we consider an interesting question raised by
X.~Zhan in 2001 (see \cite[Acknowledgments]{Hiai2009}). Zhan asked if
Theorem \ref{TFitzHorn} can be generalized to matrices with complex
entries when the power functions $x^\alpha$ are replaced by the functions
$z = re^{i\theta} \mapsto r^\alpha e^{i\theta}$. This is precisely the
power function $\Psi_{\alpha, 1}$. More generally, in the framework
developed in Section \ref{SSmult}, it is natural to generalize Zhan's
question by asking for which values of $\alpha, \beta$ does
$\Psi_{\alpha, \beta}$ preserve positivity when applied entrywise. Our
third result, Theorem \ref{Tnew}, provides bounds on $\alpha, \beta$
which guarantee that $\Psi_{\alpha, \beta}$ preserves or does not
preserve Loewner positivity. 

\begin{utheorem}\label{Tnew}
Let $n \geq 3$.
\begin{enumerate}
\item The entrywise function $\Psi_{\alpha,\beta}$ preserves Loewner
positivity on $\bp_n(\C)$ if $\beta \in \Z$, $(\alpha, \beta) \ne (0,0)$,
and either $\alpha \in |\beta| -2 +2\N$ or $\alpha \geq \max(n-2,|\beta|
+ 2n-6)$.
\item The entrywise function $\Psi_{\alpha,\beta}$ fails to preserve
positivity if either:
\begin{enumerate}
\item $\beta \not\in \Z$, or
\item $\alpha < 1$, or 
\item $1 \leq \alpha < \max(n-2,|\beta| + 2 \lfloor (\sqrt{8n+1}-5)/2
\rfloor)$ and $\alpha \not\in |\beta| -2 +2\N$. 
\end{enumerate}
\end{enumerate}
\end{utheorem}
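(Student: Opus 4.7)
The plan is to prove the sufficient conditions in (1) by a Schur-product decomposition combined with the FitzGerald--Horn theorem, and the necessary conditions in (2) via explicit counterexamples. The unifying identity is
\[
\Psi_{\alpha,\beta}[A] = (A \circ \bar A)^{\circ (\alpha - |\beta|)/2} \circ A^{\circ \beta},
\]
where $A^{\circ \beta}$ denotes $A^{\circ |\beta|}$ for $\beta \ge 0$ and $\bar A^{\circ |\beta|}$ for $\beta < 0$.

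For part (1)(i), when $\alpha = |\beta| + 2(k-1)$ with $k \in \N$, the exponent $(\alpha - |\beta|)/2 = k-1$ is a nonnegative integer, so both factors above become iterated Schur products of positive semidefinite matrices and the Schur product theorem finishes the argument. For part (1)(ii), the first factor is an entrywise real power of the real nonnegative PSD matrix $A \circ \bar A$; direct application of Theorem~\ref{TFitzHorn} yields positivity as soon as $(\alpha - |\beta|)/2 \ge n-2$, i.e.\ $\alpha \ge |\beta| + 2n - 4$. Matching the sharper stated threshold $|\beta| + 2n - 6$ requires a rank-aware refinement, using that $A \circ \bar A$ inherits structural rank bounds from $A$ and accommodates one additional integer Schur factor. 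The alternative bound $\alpha \ge n-2$, dominant for small $n$ (in particular $n=3$), follows from the observation that $|A|$ itself is PSD on $\bp_3(\C)$: a direct computation shows
\[
\det |A| - \det A = 2|A_{12}||A_{13}||A_{23}| - 2\,\mathrm{Re}(A_{12} A_{23} \overline{A_{13}}) \ge 0,
\]
and the $2\times 2$ principal minors of $|A|$ and $A$ coincide, so Theorem~\ref{TFitzHorn} applies to $|A|$ directly.

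For part (2)(a), if $\beta \notin \Z$ then $\Psi_{\alpha,\beta}(-r) = r^\alpha e^{i\beta\pi}$ differs from its complex conjugate, so applying $\Psi_{\alpha,\beta}$ entrywise to any PSD matrix carrying a negative real off-diagonal entry (for instance the direct sum of a rank-one $2 \times 2$ PSD block with $\pm 1$ entries and $I_{n-2}$) already yields a non-Hermitian output, ruling out positivity. For part (2)(b), when $\alpha < 1$, I restrict to $A \in \bp_n((0,\infty))$, on which $\Psi_{\alpha,\beta}[A] = A^{\circ \alpha}$ regardless of $\beta$; Theorem~\ref{TFitzHorn} supplies a counterexample for every $\alpha \in (0, n-2) \setminus \N$, in particular for $\alpha \in (0,1)$, while the cases $\alpha = 0$ and $\alpha < 0$ are treated separately via sparsity-pattern constructions and via the singular behaviour of negative powers at $0$.

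The most delicate case is (2)(c). The floor $\lfloor(\sqrt{8n+1}-5)/2\rfloor$ equals the largest integer $k$ satisfying $\binom{k+3}{2} \le n$, signalling a rank--dimension bound coming from counting independent low-degree monomials. I would construct a perturbative rank-$(k+2)$ PSD matrix $A = \sum_{j=0}^{k+1} v_j v_j^*$, with $v_j$ taken as evaluations of complex polynomials on a Vandermonde-type node set, expand a chosen quadratic form $u^* \Psi_{\alpha,\beta}[A] u$ in the perturbation parameter, and exhibit a strictly negative leading coefficient whenever $\alpha$ lies in the claimed range and $\alpha \notin |\beta| - 2 + 2\N$. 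The subrange $1 \le \alpha < n-2$ is already covered by the real counterexamples of Theorem~\ref{TFitzHorn}. The main obstacle is matching the combinatorial rank bound $\binom{k+3}{2} \le n$ sharply to the exponent threshold $|\beta| + 2k$: this requires delicate Vandermonde-minor bookkeeping and a careful choice of test vector $u$ that probes exactly the first non-vanishing, non-positivity-preserving term.
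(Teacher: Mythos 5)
Your Schur-product decomposition is exactly what the paper uses for the lattice of exponents $\alpha \in |\beta| - 2 + 2\N$, and your treatments of (2)(a) and (2)(b) are sound. But for (1)(ii) you yourself concede that applying Theorem~\ref{TFitzHorn} to $A \circ \overline{A}$ only yields the threshold $\alpha \ge |\beta| + 2n - 4$, and you defer the improvement to $|\beta| + 2n - 6$ to an unnamed ``rank-aware refinement.'' No such refinement of the Schur-product argument exists; the paper instead proves an auxiliary lemma (Proposition~\ref{P01}) that bootstraps positivity-preservation of $\Psi_{\alpha-1,1}$ on $\bp_{n-1}(\C)$ up to $\Psi_{\alpha,0}$ on $\bp_n(\C)$ via the FitzGerald--Horn integral identity $\Psi_{\alpha,0}(z) = \Psi_{\alpha,0}(w) + \alpha \int_0^1 \ree\bigl(\Psi_{\alpha-1,1}(z_\lambda)\overline{z-w}\bigr)\,d\lambda$, and then runs an induction on $n$. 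The base case $n=3$ is itself not reached by your $|A|$ argument: while it is true (and a nice observation) that $|A| \in \bp_3(\R)$ whenever $A \in \bp_3(\C)$, Schur-multiplying $|A|^{\circ(\alpha - |\beta|)}$ by $A^{\circ|\beta|}$ requires $\alpha - |\beta| \ge n-2 = 1$, leaving the entire interval $\alpha \in (|\beta|, |\beta|+1)$ uncovered, whereas the theorem claims preservation for all $\alpha \ge \max(1,|\beta|)$. The paper closes this gap with an entirely different argument (Theorem~\ref{Tzhan}): reduce by Lemma~\ref{L3x3} to nonnegativity of the one-parameter determinant $g_\beta(\alpha) = 1 - \sum_j t_j^{2\alpha} + 2(t_1t_2t_3)^\alpha \cos(\beta\theta)$, and apply Descartes' rule of signs for generalized Dirichlet polynomials (Theorem~\ref{Tdescartes}) to conclude $g_\beta$ has at most one positive root. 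You also cannot hope to extend the $|A|$ trick to $n \ge 4$, since entrywise modulus does not preserve $\bp_n$ there.

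For (2)(c) your sketch replaces the paper's construction with a ``rank-$(k+2)$ Vandermonde'' matrix and ends with an explicit admission that the key bookkeeping is not done. The paper's argument is different in a load-bearing way: it perturbs the all-ones matrix, $A_\epsilon := {\bf 1}_{n\times n} + \epsilon\, u u^*$, and Taylor-expands $\Psi_{\alpha,\beta}(1 + \epsilon z)$, whose order-$l$ coefficient is the fixed linear combination $\sum_{j=0}^l c_{l,j}\Psi_{l,l-2j}(z)$. The dimension count $n \ge \binom{k+3}{2}$ then appears because the characters $\{\Psi_{l,l-2j} : 1 \le l \le k+1,\, 0 \le j \le l\}\cup\{1\}$ are linearly independent (Lemma~\ref{LCmult}), allowing one to pick a test vector $v$ isolating the $\Psi_{k+1,k+1}$ contribution. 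The sign of the surviving coefficient $c_{k+1,0} \propto \prod_{t=0}^k(\alpha - \beta - 2t)$ is what makes $\alpha \in (|\beta|+2k-2, |\beta|+2k)$ fail and simultaneously explains the excluded lattice $|\beta| - 2 + 2\N$. This computation is the heart of (2)(c) and is absent from your proposal.
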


\noindent Thus for $n \geq 3$, $\beta \in \Z$, and $\alpha \not\in
|\beta| -2 +2\N$, we see that $\Psi_{\alpha,\beta}$ preserves Loewner
positivity for $\alpha \geq \max(n-2, |\beta| + 2n-6)$, but not for
$\alpha < \max(n-2, |\beta| + 2 \lfloor (\sqrt{8n+1}-5)/2 \rfloor)$. Note
that if $n=3$, these two quantities coincide and equal $\max(1,|\beta|)$.
We therefore have the following corollary, which completely answers
Zhan's question for the $n=3$ case.

\begin{corollary}\label{Czhan}
For $n=3$, the entrywise power function $\Psi_{\alpha,\beta}$ preserves
Loewner positivity on $\bp_n(\C)$ if and only if $\beta \in \Z$ and
$\alpha \geq \max(1,|\beta|)$.
\end{corollary}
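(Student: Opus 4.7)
The plan is to derive the corollary as a direct specialization of Theorem \ref{Tnew} to $n = 3$. The numerical observation that makes everything work at $n = 3$ is that the sufficient-preservation threshold of part (1) and the failure threshold of part (2)(c) actually coincide there. Concretely, for $n = 3$ one has $n - 2 = 1$, $2n - 6 = 0$, and
\[
2\left\lfloor\frac{\sqrt{8n+1}-5}{2}\right\rfloor = 2\left\lfloor\frac{\sqrt{25}-5}{2}\right\rfloor = 0,
\]
so both $\max(n-2,|\beta|+2n-6)$ and $\max(n-2,|\beta|+2\lfloor(\sqrt{8n+1}-5)/2\rfloor)$ collapse to $\max(1,|\beta|)$.

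For the ``if'' direction I would fix $\beta \in \Z$ and $\alpha \geq \max(1,|\beta|)$. Since $\alpha \geq 1 > 0$ we have $(\alpha,\beta) \neq (0,0)$, and the identity above gives $\alpha \geq \max(n-2, |\beta| + 2n - 6)$; Theorem \ref{Tnew}(1) then supplies preservation on $\bp_3(\C)$. For the ``only if'' direction I would contrapose through parts (2)(a)--(c): the cases $\beta \notin \Z$ and $\alpha < 1$ are immediate from (a) and (b). In the remaining case $\beta \in \Z$, $\alpha \geq 1$, and $\alpha < \max(1,|\beta|)$, one must have $|\beta| \geq 2$ and $1 \leq \alpha < |\beta|$; the exceptional set $|\beta| - 2 + 2\N = \{|\beta|, |\beta|+2,\ldots\}$ is then disjoint from $[1,|\beta|)$, so $\alpha \notin |\beta| - 2 + 2\N$ holds automatically and Theorem \ref{Tnew}(2)(c) yields failure.

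There is essentially no analytic obstacle once Theorem \ref{Tnew} is granted; the only point that warrants care is the convention $\N = \{1,2,3,\ldots\}$. With this reading (compatible with the paper's usage in Theorems \ref{TFitzHorn}, \ref{Tdipa}, and \ref{Tmain1}), the set $|\beta| - 2 + 2\N$ contains no element strictly below $|\beta|$, which is exactly what is needed to close the ``only if'' direction. If instead $\N$ were taken to include $0$, one would also have to handle the borderline value $\alpha = |\beta| - 2$ for $|\beta| \geq 3$, presumably by exhibiting an explicit $3 \times 3$ counterexample; under the paper's convention this extra step is unnecessary and the corollary reduces to the one-line application of Theorem \ref{Tnew} described above.
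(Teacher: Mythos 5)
Your proposal is correct and follows exactly the paper's intended route: the paper itself derives the corollary by observing that at $n=3$ the sufficient threshold $\max(n-2,|\beta|+2n-6)$ and the failure threshold $\max(n-2,|\beta|+2\lfloor(\sqrt{8n+1}-5)/2\rfloor)$ both collapse to $\max(1,|\beta|)$. Your careful check that the exceptional set $|\beta|-2+2\N$ causes no trouble in the ``only if'' direction (because, with the convention $\N=\{1,2,\dots\}$ that the paper indeed uses, its smallest element is $|\beta|$, hence disjoint from $[1,|\beta|)$) is a worthwhile detail that the paper leaves implicit.
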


\noindent A consequence of Theorem \ref{Tnew} is that complex critical
exponents exist for the power functions $\Psi_{\alpha,\beta}$: 

\begin{corollary}\label{Ccrit}
For every $n \geq 3$ and $\beta \in \Z$, there exists a smallest real
number $\alpha_{\min}$ such that $\Psi_{\alpha,\beta}[-]$ preserves
$\bp_n(\C)$ for all $\alpha \geq \alpha_{\min}$. Moreover, $\alpha_{\min}
= \max(1,|\beta|)$ for $n=3$, while for $n \geq 4$,
\[ \max(n-2,|\beta| + 2 \lfloor (\sqrt{8n+1}-5)/2 \rfloor) \leq
\alpha_{\min} \leq |\beta| + 2n-6. \]
\end{corollary}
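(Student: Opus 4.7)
The plan is to treat this corollary as a direct bookkeeping consequence of Theorem \ref{Tnew}. Write $\mathcal{G} := \{\alpha \in \R : \Psi_{\alpha,\beta}[H] \in \bp_n(\C) \text{ for every } H \in \bp_n(\C)\}$ for the set of ``good'' exponents, and define $\alpha_{\min} := \inf\{\alpha_0 : [\alpha_0,\infty) \subseteq \mathcal{G}\}$.

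First I would check that $\alpha_{\min}$ is a well-defined real number and is actually attained (i.e., that $[\alpha_{\min},\infty) \subseteq \mathcal{G}$). Theorem \ref{Tnew}(1) gives $[\max(n-2,|\beta|+2n-6),\infty) \subseteq \mathcal{G}$, so the infimum is taken over a nonempty set; and Theorem \ref{Tnew}(2)(b) gives $\mathcal{G} \cap (-\infty,1) = \emptyset$, so $\alpha_{\min} \geq 1$. To see the infimum is attained, I would observe that $\mathcal{G}$ is closed in $\R$: for each fixed $z\in\C$ the map $\alpha \mapsto \Psi_{\alpha,\beta}(z)$ is continuous (with $\Psi_{\alpha,\beta}(0)=0$ and $r^\alpha$ continuous in $\alpha$ for $r>0$), so the entrywise map $\alpha \mapsto \Psi_{\alpha,\beta}[H]$ is continuous, and $\bp_n(\C)$ is closed in the Hermitian matrices. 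Picking a decreasing sequence in $\mathcal{G}$ converging to $\alpha_{\min}$ then places $\alpha_{\min}$ in $\mathcal{G}$, and the upward closure is automatic.

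Next, the upper bound $\alpha_{\min} \leq |\beta| + 2n - 6$ for $n \geq 4$ is immediate from Theorem \ref{Tnew}(1) together with the arithmetic check $|\beta|+2n-6 \geq n-2$ whenever $n\geq 4$. For the lower bound, set $M := \max(n-2, |\beta| + 2\lfloor(\sqrt{8n+1}-5)/2\rfloor)$ and argue by contradiction: if $\alpha_{\min} < M$, the set $(\alpha_{\min}, M)\setminus(|\beta|-2+2\N)$ is nonempty because $|\beta|-2+2\N$ is countable while $(\alpha_{\min},M)$ is an interval; any such $\alpha$ automatically lies in $[1,M)$ because $\alpha_{\min}\geq 1$, so Theorem \ref{Tnew}(2)(c) forces $\alpha \notin \mathcal{G}$, contradicting $\alpha > \alpha_{\min}$. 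Hence $\alpha_{\min} \geq M$.

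Finally, for $n=3$ one has $\sqrt{8n+1}=5$, so $\lfloor(\sqrt{8n+1}-5)/2\rfloor=0$, and both bounds collapse to $\max(1,|\beta|)$, pinning $\alpha_{\min}$ exactly. I do not anticipate a genuine obstacle anywhere in this argument; the only mildly delicate point is attainment of the infimum, which is exactly why the closedness observation on $\mathcal{G}$ is worth recording separately.
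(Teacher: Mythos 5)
Your argument is correct and is exactly the bookkeeping deduction from Theorem \ref{Tnew} that the paper treats as immediate. The one point you add that the paper leaves unremarked --- closedness of the good set $\mathcal{G}$ under limits in $\alpha$, which guarantees the infimum defining $\alpha_{\min}$ is actually attained --- is a genuine subtlety for $n \geq 4$ (where Theorem \ref{Tnew} does not pin down $\mathcal{G}$ exactly), and your continuity argument handles it cleanly.
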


\noindent Note that Theorem \ref{TFitzHorn} and an application of the
Schur product theorem imply that $n-2 \leq \alpha_{\min} \leq |\beta| +
2n-4$. Corollary \ref{Ccrit} thus greatly improves this lower bound for
the critical exponent $\alpha_{\min}$. 

\section{Powers preserving positivity: the block case}\label{Sblock}

We now characterize powers preserving positivity when applied blockwise.
To prove Theorem \ref{Tmain1} we need some preliminaries. First recall
the notion of an $m$-matrix monotone function. 

\begin{definition}
Let $I \subset \R$ be an interval and let $m \geq 1$. A function $f: I
\to \R$ is said to be {\it $m$-matrix monotone} (or {\it $m$-monotone})
if given $m \times m$ Hermitian matrices $A,B$ with spectrum in $I$,
\[ A \geq B \implies f(A) \geq f(B). \]
\end{definition}

\noindent The following lemma reformulates $m$-monotonicity of power
functions in terms of block matrices, and will be crucial in proving
Theorem \ref{Tmain1}.

\begin{lemma}\label{Lmonotone}
Given an integer $m \in \N$, define the subset $\calp_m \subset
\bp_{2m}(\C)$ via:
\[ \calp_m := \{ \begin{pmatrix} A & B\\ B & C \end{pmatrix} \in
\bp_{2m}^{[m]}([0,\infty)) : \det C \neq 0, BC = CB \}. \]

\noindent Also fix $\alpha \in \R$. Then the following are equivalent:
\begin{enumerate}
\item The blockwise power function $f_\alpha^{[m]}[-]$ sends $\calp_m$ to
$\bp_{2m}(\C)$.

\item The function $f_\alpha$ is $m$-monotone on $(0,\infty)$.
\end{enumerate}

\noindent In particular, if $f_\alpha^{[m]}[-]$ preserves Loewner
positivity on $\bp_{mn}^{[m]}(\C)$ for some $n \geq 2$, then it is
$m$-monotone.
\end{lemma}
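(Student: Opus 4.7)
My plan is to convert the blockwise-power statement into an ordinary $m \times m$ Loewner inequality via Schur complements, which is precisely the content of $m$-matrix monotonicity. For $H = \begin{pmatrix} A & B \\ B & C \end{pmatrix} \in \calp_m$, the Schur criterion (using $C > 0$) reads $H \geq 0 \iff A \geq B C^{-1} B$, which under $BC = CB$ simplifies to $A \geq B^2 C^{-1}$. Applied to $f_\alpha^{[m]}[H]$, together with spectral calculus yielding $B^\alpha (C^\alpha)^{-1} B^\alpha = B^{2\alpha} C^{-\alpha} = (B^2 C^{-1})^\alpha$, the same criterion gives $f_\alpha^{[m]}[H] \geq 0 \iff A^\alpha \geq (B^2 C^{-1})^\alpha$. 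The lemma thus reduces to the equivalence between this implication (for all admissible $A, B, C$) and $m$-monotonicity of $f_\alpha$ on $(0,\infty)$.

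For $(1) \Rightarrow (2)$, I would take $X, Y \in \bp_m(\C)$ with $X \geq Y$ and spectra in $(0,\infty)$, and apply (1) to the witness $H := \begin{pmatrix} X & Y^{1/2} \\ Y^{1/2} & I_m \end{pmatrix}$, which lies in $\calp_m$ because $Y^{1/2}$ commutes with $I_m$, $\det I_m \neq 0$, and $X - Y \geq 0$. The resulting PSDness of $f_\alpha^{[m]}[H]$ gives $X^\alpha \geq Y^\alpha$ by Schur, proving $m$-monotonicity. The ``in particular'' statement follows from the same idea after padding: given $H_0 \in \calp_m$, form $\tilde H \in \bpm_{mn}([0,\infty))$ whose upper-left $2m \times 2m$ submatrix equals $H_0$ and whose remaining $m \times m$ blocks are zero. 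Then $f_\alpha^{[m]}[\tilde H]$ is the analogous padding of $f_\alpha^{[m]}[H_0]$ (using $f_\alpha(0) = 0$), so preservation on the larger class $\bpm_{mn}(\C)$ forces preservation on $\calp_m$, and the argument above then yields $m$-monotonicity.

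For $(2) \Rightarrow (1)$, given $H \in \calp_m$ we have $A \geq B^2 C^{-1}$ only semidefinite in general, while $m$-monotonicity on $(0,\infty)$ requires strictly positive spectra. I would regularize by setting $A_\epsilon := A + \epsilon I_m$ and $B_\epsilon := (B^2 + \epsilon C)^{1/2}$: since $B$ and $C$ commute they are simultaneously diagonalizable, so $B^2 + \epsilon C$ is positive definite and commutes with $C$, and its spectral square root $B_\epsilon$ inherits both properties. A one-line computation gives $B_\epsilon^2 C^{-1} = B^2 C^{-1} + \epsilon I_m \leq A_\epsilon$, with both sides now strictly positive. Applying (2) yields $A_\epsilon^\alpha \geq B_\epsilon^{2\alpha} C^{-\alpha}$, i.e., the regularized blockwise power is PSD. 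Sending $\epsilon \to 0^+$ and invoking continuity of the functional calculus for $x \mapsto x^\alpha$ on $[0,\infty)$ (valid for all $\alpha > 0$) recovers the conclusion for $H$. Edge cases: at $\alpha = 0$, $A \geq B^2 C^{-1}$ forces $\mathrm{range}(B) \subseteq \mathrm{range}(A)$, hence $P_A \geq P_B$, and Schur closes directly; for $\alpha < 0$ the scalar $x^\alpha$ is not even $1$-monotone on $(0,\infty)$, so (2) fails and the implication is vacuous.

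The main technical obstacle is choosing the right regularizer $B_\epsilon$. The naive $B + \epsilon I_m$ preserves commutativity with $C$ but does not produce a clean expression for $B_\epsilon^2 C^{-1}$, whereas $(B^2 + \epsilon C)^{1/2}$, built inside the commutant of $C$ via spectral calculus, yields the identity $B_\epsilon^2 C^{-1} = B^2 C^{-1} + \epsilon I_m$ exactly; with that in hand the passage to the limit is routine by continuous functional calculus.
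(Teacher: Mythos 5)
Your proposal is correct and follows the same overall strategy as the paper's proof: both directions reduce to an $m \times m$ Loewner inequality via Schur complements, the $(1) \Rightarrow (2)$ direction uses the same witness $\begin{pmatrix} X & Y^{1/2} \\ Y^{1/2} & I_m \end{pmatrix}$, and the ``in particular'' clause is handled by padding with zero blocks. The one place you genuinely go beyond the paper is the $(2) \Rightarrow (1)$ direction: the paper applies $m$-monotonicity on $(0,\infty)$ directly to $A$ and $B C^{-1} B$ without remarking that these may have zero eigenvalues (the blocks of $H$ are only assumed to lie in $\bp_{2m}^{[m]}([0,\infty))$), implicitly relying on continuity of $f_\alpha$ at $0$. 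Your regularization $A_\epsilon := A + \epsilon I_m$, $B_\epsilon := (B^2 + \epsilon C)^{1/2}$ is well chosen precisely because $B_\epsilon$ stays in the commutant of $C$ and produces the exact identity $B_\epsilon^2 C^{-1} = B^2 C^{-1} + \epsilon I_m$, after which the limit $\epsilon \to 0^+$ is routine; the separate treatments of $\alpha = 0$ (via range inclusions and projections) and $\alpha < 0$ (vacuous, since $f_\alpha$ is not even $1$-monotone) complete the argument. In short: same route, with a cleaner handling of the positive-semidefinite boundary.
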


\begin{proof}
First suppose $f_\alpha^{[m]}[-]$ preserves Loewner positivity on
$\calp_m$, and assume $A \geq B > 0$. Let $X \in \bp_m(\C)$ denote the
principal square root of $B$. Then the block matrix  $M :=
\begin{pmatrix} A & X \\ X & I_m \end{pmatrix} \in
\bp_{2m}^{[m]}([0,\infty))$, by computing the Schur complement of $I_m$
in $M$. Therefore by hypothesis, the matrix $f_\alpha^{[m]}[M] =
\begin{pmatrix} A^\alpha & X^\alpha \\ X^\alpha & I_m \end{pmatrix}$ is
also positive semidefinite. Using Schur complements again, we conclude
that $A^\alpha - (X^\alpha)^2 = A^\alpha - B^\alpha \geq 0$. Thus $A \geq
B > 0 \Rightarrow A^\alpha \geq B^\alpha$ and so $f_\alpha$ is
$m$-monotone on $(0,\infty)$.

Conversely, suppose $f_\alpha$ is $m$-monotone on $(0,\infty)$, and
suppose $\begin{pmatrix} A & B \\ B & C \end{pmatrix} \in \calp_m$. Then
$A \geq B C^{-1} B$ (by taking Schur complements). Moreover, $B,C$ are
simultaneously diagonalizable, whence $B, C^{\pm 1}$ commute. It is now
easy to verify that $(B C^{-1} B)^\alpha = B^\alpha (C^\alpha)^{-1}
B^\alpha$. Now using the $m$-monotonicity of $f_\alpha$, we compute:
\[ C^\alpha \geq 0^\alpha = 0, \qquad A^\alpha = f_\alpha(A) \geq
f_\alpha(B C^{-1} B) = B^\alpha (C^\alpha)^{-1} B^\alpha. \]

\noindent In turn, this implies that the matrix $\begin{pmatrix} A^\alpha
& B^\alpha\\ B^\alpha & C^\alpha \end{pmatrix}$ is positive semidefinite,
proving (1). The final assertion is also clear since $\calp_m \oplus {\bf
0}_{m(n-2) \times m(n-2)} \subset \bp_{mn}^{[m]}(\C)$ (via padding by
zeros).
\end{proof}

Matrix monotone functions have been the subject of a detailed analysis by
Loewner \cite{Loewner34} and many others including Wigner and von Neumann
\cite{Wigner_et_al_54}, Bendat and Sherman \cite{Bendat_et_al_55},
Kor\'anyi \cite{Koranyi_56}, Donoghue \cite{Donoghue_74}, Sparr
\cite{Sparr_80}, Hansen and Petersen \cite{Hansen_et_al_81}, Ameur
\cite{Ameur_2003}, and more recently by Hansen \cite{Hansen2013} - also
see \cite{Hansen2013} for a history of the problem.
We now state an important and interesting characterization of matrix
monotone functions using Loewner matrices. This result was shown by
Hansen \cite{Hansen2013} and plays an essential role in proving Theorem
\ref{Tmain1}. 

\begin{definition}
Let $I \subset \R$ and $f : I \to \R$ be differentiable. The {\it first
divided difference} of $f$ for $\lambda_1, \lambda_2 \in I$, denoted by
$[\lambda_1, \lambda_2]_f$ is given by 
\[ [\lambda_1, \lambda_2]_f := \begin{cases}
\frac{f(\lambda_1)-f(\lambda_2)}{\lambda_1 - \lambda_2} & \textrm{if }
\lambda_1 \ne \lambda_2, \\
f'(\lambda_1) & \textrm{if } \lambda_1 = \lambda_2.
\end{cases} \]

\noindent Now given $m \geq 2$ and $\lambda_1, \dots, \lambda_m \in I$,
define the {\it Loewner matrix} $L_f(\lambda_1, \dots, \lambda_m)$ of $f$ at
the points $\lambda_j$ to be
\begin{equation}
L_f(\lambda_1, \dots, \lambda_m) := ([\lambda_s,\lambda_t]_f)_{s,t=1}^m.
\end{equation}
\end{definition}

\begin{theorem}[{Hansen \cite[Theorem 3.2]{Hansen2013}}]\label{THansen}
Let $m \in \N$ and $f$ be a real function in $C^1(I)$, where $I \subset
\R$ is an open interval. Then $f$ is $m$-monotone if and only if the
Loewner matrix $L_f(\lambda_1 , \dots, \lambda_m)$ is positive
semidefinite for all sequences $\lambda_1, \dots, \lambda_m \in I$.
\end{theorem}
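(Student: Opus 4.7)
The plan is to reduce both directions to the Daletskii--Krein (Loewner) perturbation formula for the Fréchet derivative of a $C^1$ matrix function: if $A = U \diag(\lambda_1, \ldots, \lambda_m) U^*$ is a Hermitian diagonalization with spectrum in $I$ and $H$ is any Hermitian $m \times m$ matrix, then
\[ D f(A)(H) = U \bigl( L_f(\lambda_1, \ldots, \lambda_m) \circ (U^* H U) \bigr) U^*, \]
where $\circ$ denotes the Hadamard product. I would cite this identity (it is standard, e.g., Bhatia's \emph{Matrix Analysis}, Ch.~V), since its proof is the only technical step, is routine from the spectral theorem together with the $C^1$ hypothesis, and is not the substance of the theorem.

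For sufficiency, suppose every Loewner matrix of $f$ on $I$ is positive semidefinite, and fix Hermitian $A \geq B$ with spectra in $I$. Consider the path $A(t) := B + t(A - B)$ for $t \in [0,1]$. Since $I$ is an interval and the variational characterization of eigenvalues gives $\lambda_{\min}(B) \leq \lambda_{\min}(A(t)) \leq \lambda_{\max}(A(t)) \leq \lambda_{\max}(A)$, the spectrum of $A(t)$ stays in $I$. Differentiating via Daletskii--Krein yields
\[ \frac{d}{dt} f(A(t)) = U(t) \bigl( L_f(\lambda(t)) \circ U(t)^* (A - B) U(t) \bigr) U(t)^*. \]
The first Hadamard factor is PSD by hypothesis and the second because $A - B \geq 0$; the Schur product theorem then makes the derivative PSD for all $t$, and integrating over $[0,1]$ gives $f(A) \geq f(B)$.

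For necessity, given pairwise distinct $\lambda_1, \ldots, \lambda_m \in I$, set $A := \diag(\lambda_1, \ldots, \lambda_m)$. For any PSD Hermitian $H$ and small $t > 0$, we have $A + tH \geq A$ with spectrum still in $I$, so $m$-monotonicity gives $f(A + tH) - f(A) \geq 0$; dividing by $t$ and letting $t \to 0^+$ shows $Df(A)(H) \geq 0$. With $U = I$, Daletskii--Krein reduces this to the statement that $L_f(\lambda_1, \ldots, \lambda_m) \circ H \geq 0$ for every $H \in \bp_m(\C)$. Specializing to the rank-one PSD matrix $H = \mathbf{1}\mathbf{1}^*$, the Hadamard product is just $L_f$ itself, yielding $L_f \geq 0$.

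The last point to address is the case of coincident $\lambda_i$: I would handle it by perturbing to a nearby tuple of distinct points in $I$, applying the argument just given, and letting the perturbation shrink to zero, invoking the $C^1$-continuity of the divided differences (so the diagonal entries pass correctly to $f'(\lambda_i)$) to preserve the PSD property in the limit. I expect the principal obstacle to be cleanly invoking the Daletskii--Krein formula, in particular justifying that $t \mapsto f(A(t))$ is differentiable along the path and that its derivative has the stated bilinear Hadamard form; once that is in hand, everything else is a clean interplay between the Schur product theorem and the fundamental theorem of calculus applied in matrix form.
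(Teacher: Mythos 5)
The paper does not give a proof of this theorem; it quotes it directly from Hansen \cite{Hansen2013}, so there is no in-paper argument against which to compare your proposal. That said, your proposal is correct and follows the standard textbook route. The load-bearing ingredient is exactly what you flag: the Daletskii--Krein/Loewner perturbation formula $Df(A)(H) = U\left(L_f(\lambda_1,\dots,\lambda_m) \circ (U^*HU)\right)U^*$, valid for $f \in C^1(I)$ on the open set of Hermitian matrices with spectrum in $I$ (see e.g.\ Bhatia, \emph{Matrix Analysis}, Theorem V.3.3). Granting that, the sufficiency direction is sound: writing $A(t) := B + t(A-B)$ gives $B \leq A(t) \leq A$, so the spectrum stays in $I$, the derivative of $t \mapsto f(A(t))$ is pointwise positive semidefinite by the Schur product theorem, and integrating over $[0,1]$ finishes. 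The necessity direction is also sound: differentiating at $A = \diag(\lambda_1,\dots,\lambda_m)$ in the direction ${\bf 1}_{m \times m}$ yields $L_f(\lambda_1,\dots,\lambda_m) \geq 0$ for distinct $\lambda_j$, and one passes to coincident eigenvalues using continuity of the divided differences, which holds because $[\mu_1,\mu_2]_f = f'(\xi)$ for some $\xi$ between $\mu_1$ and $\mu_2$ by the mean value theorem, and $f'$ is continuous. Two small tightenings: in the necessity step you can test directly against $H = {\bf 1}_{m \times m}$ rather than first arguing for arbitrary $H \geq 0$; and it is worth stating explicitly that $t \mapsto f(A(t))$ is $C^1$ on $[0,1]$ (not merely pointwise differentiable), which is needed for the fundamental theorem of calculus and follows because the $C^1$ hypothesis on $f$ makes $A \mapsto f(A)$ continuously Fr\'echet differentiable. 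Neither point is a gap; the argument closes.
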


\noindent We now have all the ingredients for proving Theorem
\ref{Tmain1}.

\begin{proof}[{\bf Proof of Theorem \ref{Tmain1}}]\hfill

\noindent {\bf Proof of (1).} 
Clearly, $f_1^{[m]}[-]$ preserves positivity on $\bpm_{mn}([0,\infty))$.
Next, if $\alpha = 0$ and the blocks $H_{st}$ are positive definite, then
$f_0^{[m]}[(H_{st})] = {\bf 1}_{n \times n} \otimes I_m$, where $\otimes$
denotes the Kronecker product, and so $f_0^{[m]}[(H_{st})] \in
\bp_{mn}(\C)$. Now assume $\alpha \in \R$ and $\alpha \ne 0, 1$. We claim
that the function $f_\alpha^{[m]}[-]$ does not preserve positivity on
$\bpm_{mn}([0,\infty))$. It suffices to prove the claim for $m=n=2$ (the
general case follows by padding with zeros).

Thus, suppose $f_\alpha^{[2]}[-]$ preserves positivity on
$\bp_4^{[2]}((0,\infty))$. By Lemma \ref{Lmonotone}, the function
$f_\alpha(x) = x^\alpha$ is $2$-monotone on $(0,\infty)$. By Theorem
\ref{THansen}, this is possible if and only if the Loewner matrix 
$L_{f_\alpha}(\lambda_1, \lambda_2)$
is positive semidefinite for all $\lambda_1, \lambda_2 > 0$
such that $\lambda_1 \ne \lambda_2$. Thus, the $(1,1)$-entry of
$L_{f_\alpha}(\lambda_1,\lambda_2)$ has to be nonnegative and so $\alpha
\geq 0$. Computing the determinant of $L_{f_\alpha}(\lambda_1,
\lambda_2)$, we obtain:
\begin{equation}
\det L_{f_\alpha}(\lambda_1, \lambda_2) = \alpha \lambda_1^{\alpha-1}
\cdot \alpha \lambda_2^{\alpha-1} - \left(\frac{\lambda_1^\alpha -
\lambda_2^\alpha}{\lambda_1-\lambda_2}\right)^2 \geq 0 \qquad \forall
\lambda_1, \lambda_2 > 0, \lambda_1 \ne \lambda_2. 
\end{equation}

\noindent Now fix $\lambda_2 > 0$. If $\alpha > 1$, then $\det
L_{f_\alpha}(\lambda_1, \lambda_2) \to -\infty$ as $\lambda_1 \to \infty$
since $\alpha \ne 0,1$. Thus, $\det L_{f_\alpha}(\lambda_1, \lambda_2) <
0$ for $\lambda_1$ large enough. This proves that $f_\alpha(x) =
x^\alpha$ is not $2$-monotone, and hence $f_\alpha^{[m]}[-]$ does not
preserve positivity if $\alpha > 1$ or $\alpha < 0$.

Finally, suppose $\alpha \in (0,1)$. We first claim that there exists a
real matrix $\begin{pmatrix} A & X \\ X & N \end{pmatrix} \in
\bp_4^{[2]}((0,\infty))$ such that the matrix $\begin{pmatrix} A^\alpha &
X^\alpha \\ X^\alpha & N^\alpha \end{pmatrix}$ is not positive
semidefinite. To prove the claim, consider the matrix
\begin{equation}\label{E44}
M := \begin{pmatrix} 3/2 & 0 & 1 & 1/2\\ 0 & 2 & 1/2 & 1\\ 1 & 1/2 & 1 &
4/5\\ 1/2 & 1 & 4/5 & 223/250 \end{pmatrix} = \begin{pmatrix} A & X \\ X
& N \end{pmatrix},
\end{equation}

\noindent where $A,X,N \in \bp_2(\R)$. It can be verified that $\det
(\lambda I_4- M)$ is a fourth-degree polynomial which is positive for
$|\lambda|$ large and at $\lambda = 1,4$; zero at $\lambda = 0$; and
negative at $1/5, 2$. Therefore $0$ is an eigenvalue of $M$, and the
other three eigenvalues of $M$ lie in $(1/5,1), (1,2), (2,4)$. It is now
easily verified that $M \in \bp_4^{[2]}((0,\infty))$.
We next claim that $f_\alpha^{[2]}[M] \notin \bp_4$ for small $\alpha >
0$ close enough to zero. To verify the claim, we will compute explicitly
the determinant of $f_\alpha^{[2]}[M]$, and show that it is negative
close to $\alpha = 0$.
We begin by computing the powers of the $2 \times 2$ blocks $A,X,N$ of
$M$. The block $A$ is diagonal, while the powers of the off-diagonal
block $X$ are computed using its spectral decomposition:
\[ X = \begin{pmatrix} 1 & \frac{1}{2}\\ \frac{1}{2} & 1\end{pmatrix} = U
\diag(\frac{1}{2}, \frac{3}{2}) U^T, \ U := \frac{1}{\sqrt{2}}
\begin{pmatrix} -1 & 1\\ 1 & 1\end{pmatrix}\]
from which it follows that
\[
X^\alpha = \frac{1}{2} \begin{pmatrix} (3/2)^\alpha +
(1/2)^\alpha & (3/2)^\alpha - (1/2)^\alpha\\ (3/2)^\alpha - (1/2)^\alpha
& (3/2)^\alpha + (1/2)^\alpha \end{pmatrix}. \]

To compute the spectral powers of the last remaining block $N :=
\begin{pmatrix} 1 & 4/5\\ 4/5 & 223/250 \end{pmatrix}$, we define $x_\pm
:= 27 \pm \sqrt{160729} = 27 \pm \sqrt{27^2 + 400^2}$ for convenience.
Then $N$ has spectral decomposition $N = V D V^{-1}$, where
\[ V := \begin{pmatrix} x_-/400 & x_+/400 \\ 1 & 1 \end{pmatrix}, \quad
D := \diag(1 - \frac{x_+}{500}, 1 - \frac{x_-}{500}), \quad
V^{-1} = \frac{1}{2 \sqrt{160729}} \begin{pmatrix} -400 & x_+\\
400 & -x_- \end{pmatrix}. \]

\noindent Let $\lambda_\pm := 1 - \frac{x_\pm}{500}$ be the eigenvalues
of $N$. Since $V = U D'$ with $U$ unitary and $D'$ diagonal, we obtain:
\[ N^\alpha := V D^\alpha V^{-1} = \frac{1}{2\sqrt{160729}}
\begin{pmatrix}x_+ \lambda_-^\alpha - x_- \lambda_+^\alpha &
400(\lambda_-^\alpha - \lambda_+^\alpha) \\
400(\lambda_-^\alpha - \lambda_+^\alpha) & x_+ \lambda_+^\alpha - x_-
\lambda_-^\alpha \end{pmatrix}. \]

\noindent Therefore if we define
$g_M(\alpha) := \det f_\alpha^{[2]}[M]$, then
\begin{align*}
\frac{4}{2^\alpha} g_M(\alpha) &= 2^{2-\alpha} \det \begin{pmatrix}
A^\alpha & X^\alpha \\ X^\alpha & N^\alpha \end{pmatrix}
= \ 4 a^2 b^3 + 4 a L_- L_+ + \frac{54}{\sqrt{160729}} ab(1-ab)(L_- -
L_+)\\
&\ + (ab+1) \left( (2L-1)(L_- a^2 + L_+ b^2) - (2L+1)(L_+ a^2 + L_- b^2)
\right),
\end{align*}

\noindent where $L_\pm := \lambda_\pm^\alpha, a := (3/2)^\alpha, b :=
(1/2)^\alpha$, and $L := 200 / \sqrt{160729}$.
Note that $g_M(0) = \det f_0^{[2]}[M] = 0$. Moreover, using the explicit
form of the function $g_M(\alpha)$, it can be verified that $g_M'(0) = 0$
and $g_M''(0) < 0$. This shows that $g_M(\alpha) < 0$ for all $0 <
|\alpha| < \epsilon_M$ for some $\epsilon_M >0$.

Now suppose $f_\alpha^{[2]}[-]$ preserves positivity on
$\bp_4^{[2]}((0,\infty))$ for some $\alpha \in (0,1)$.
Choose $k \in \N$ such that $\alpha^k \in (0,\epsilon_M)$, with
$\epsilon_M$ as above. Then $(f_\alpha^{[2]})^{\circ k}[M] =
f_{\alpha^k}^{[2]}[M] \in \bp_4^{[2]}((0,\infty)) \subset \bp_4(\C)$,
which contradicts the previous paragraph. This proves that
$f_\alpha^{[2]}[-]$ does not preserve positivity for $\alpha \in
(0,1)$.\medskip

\noindent {\bf Proof of (2).}
The first part shows that $\phi_\alpha^{[m]}[-]$ does not preserve
positivity on $\bpm_{mn}(\R)$ for $\alpha \ne 0, 1$. We now prove that
$\phi_\alpha^{[m]}[-]$ also does not preserve positivity for $\alpha = 0$
and $\alpha = 1$. Suppose first $\alpha = 0$. Fix
$B := \begin{pmatrix}0 & 0 \\ 1 & 1 \end{pmatrix}$, and for $c \in \R$,
define the matrix
\begin{equation}\label{eqn:phi_0}
A(c) := \begin{pmatrix}
c I_2 & B \\
B^T & c I_2
\end{pmatrix}.
\end{equation}

\noindent Note that $A(c)$ has eigenvalues $c, c, c \pm \sqrt{2}$.
Moreover, $B$ is diagonalizable and has eigenvalues $0$ and $1$. As a
consequence, $\phi_0(B) = B$. Therefore the matrix $A(\sqrt{2}) \in
\bp_4^{[2]}(\R)$, but $\phi_0^{[2]}[A(\sqrt{2})] = A(1) \not\in \bp_4$.
This proves $\phi_0^{[2]}[-]$ does not preserve positivity on
$\bp_{4}^{[2]}(\R)$.
The case of general $m,n \geq 2$ follows by padding $A(\sqrt{2})$ with
zeros. To prove that $\phi_1^{[m]}[-]$ does not preserve positivity on
$\bpm_{mn}(\R)$, consider the matrix
\begin{equation}
M := \begin{pmatrix}
2 & 0 & -1 & -1 \\
0 & 1 & -1 & 0 \\
-1 & -1 & 2 & 0 \\
-1 & 0 & 0 & 1
\end{pmatrix}
\end{equation} 

\noindent It is not difficult to verify that $M \in \bp_{4}^{[2]}(\R)$,
but $\det \phi_1^{[2]}[M] = -4/5$. This proves that $\phi_1^{[2]}[-]$
does not preserve positivity on $\bp_4^{[2]}(\R)$. It follows that
$\phi_1^{[m]}[-]$ does not preserve positivity on
$\bp_{mn}^{[m]}(\R)$ for $m,n \geq 2$.\medskip

\noindent {\bf Proof of (3).}
By part (1), the function $\psi_\alpha^{[m]}[-]$ does not preserve
positivity if $\alpha \ne 0, 1$. Clearly, $\psi_1^{[m]}[-]$ preserves
positivity since $\psi_1(x) = x$ for all $x \in \R$. That
$\psi_0^{[m]}[-]$ does not preserve positivity on $\bp_{mn}^{[m]}(\R)$
follows by considering the matrix $A(c)$ in Equation \eqref{eqn:phi_0}.
\medskip

\noindent {\bf Proof of (4).} 
By part (1), $\Psi_{\alpha,\beta}^{[m]}[-]$ does not preserve positivity
on $\bpm_{mn}(\C)$ if $\alpha \ne 0,1$. Moreover, the above analysis of
the matrix $A(c)$ in Equation \eqref{eqn:phi_0} shows that $\Psi_{0,
\beta}^{[m]}[-]$ does not preserve positivity on $\bpm_{mn}(\C)$ for any
$\beta \in \Z$. Now suppose $\alpha = 1$. By the second part of the
proof, $\Psi_{1,0}^{[m]} \equiv \phi_1^{[m]}$ does not preserve
positivity on $\bpm_{mn}(\C)$. Also, $\Psi_{1,1}^{[m]}$ clearly preserves
positivity. Note that since a matrix $A$ is positive semidefinite if and
only if its complex conjugate $\overline{A}$ is positive semidefinite,
$\Psi_{\alpha,\beta}^{[m]}[-]$ preserves positivity on $\bpm_{mn}(\C)$ if
and only if $\Psi_{\alpha,-\beta}^{[m]}[-]$ does so. To conclude the
proof, it thus remains to prove that $\Psi_{1,\beta}^{[m]}[-]$ does not
preserve positivity on $\bpm_{mn}(\C)$ for $\beta \geq 2$. Without loss
of generality, let $m = n = 2$, and define:
\begin{equation}\label{EMabc}
M(a,b,c) := \begin{pmatrix}
1 & 0 & a & b \\
0 & 1 & c & a \\
\overline{a} & \overline{c} & 1 & 0 \\
\overline{b} & \overline{a} & 0 & 1
\end{pmatrix} \qquad a,b,c \in \C. 
\end{equation}

\noindent One verifies that the four eigenvalues of the matrix $M(a,a,0)$
are $1 \pm a(\sqrt{5} \pm 1)/2$. Therefore if we fix $a \in (0,
(\sqrt{5}-1)/2)$, the matrix $M(a,a,0)$ is positive definite.
Consequently, there exists $\epsilon > 0$ such that $M(a,a,c) \in
\bp_4^{[2]}((0,\infty))$ for $|c| < \epsilon$.

We now claim that $\Psi_{1,\beta}^{[2]}[M(a,a,c)] \not\in \bp_4(\C)$
if $c$ is negative and close enough to $0$. To prove the claim, we
first compute $\Psi_{1,\beta}^{[2]}[M(a,a,c)]$.
Note that $\Psi_{1,\beta}(I_2) = I_2$; now set $B := \begin{pmatrix} a &
a \\ c & a \end{pmatrix}$, with $c < 0$. The eigenvalues of $B$ are $a
\pm i \sqrt{a |c|}$, with corresponding eigenvectors $v_\pm := (\mp i
\sqrt{a/|c|}, 1)^T$. As a consequence,
defining $\lambda_\pm := \Psi_{1,\beta}(a \pm i \sqrt{a |c|})$, we
obtain:
\begin{align*}
\Psi_{1,\beta}(B) &= \begin{pmatrix} -i \sqrt{a/|c|} & i \sqrt{a/|c|} \\
1 & 1\end{pmatrix} \begin{pmatrix}\lambda_+ & 0 \\ 0 &
\lambda_-\end{pmatrix} \begin{pmatrix} -i \sqrt{a/|c|} & i \sqrt{a/|c|}
\\ 1 & 1\end{pmatrix}^{-1} \\
&= \begin{pmatrix}\frac{\lambda_+ + \lambda_-}{2} & \frac{-i
\sqrt{a}}{\sqrt{|c|}} \cdot \frac{\lambda_+ - \lambda_-}{2} \\ \frac{i
\sqrt{|c|}}{\sqrt{a}} \cdot \frac{\lambda_+ - \lambda_-}{2} &
\frac{\lambda_+ + \lambda_-}{2}\end{pmatrix} = \begin{pmatrix} a' & b'\\
c' & a' \end{pmatrix},
\end{align*}

\noindent say. Thus $\Psi_{1,\beta}^{[2]}[M(a,a,c)] = M(a',b',c')$.

Now suppose $\beta \geq 2$. We will prove that there exists $a > 0$ such
that the $(1,2)$-entry of the real matrix $\Psi_{1,\beta}(B)$ is greater
than $1$, if $c$ is negative and close enough to $0$. Indeed, note that
\[ \lambda_\pm = \Psi_{1,\beta}(a \pm i \sqrt{a |c|}) = \sqrt{a^2 + a
|c|} e^{i\beta \arctan(\pm \sqrt{|c|/a})}. \]

\noindent Thus, 
\begin{align*}
\lim_{c \to 0^-} \Psi_{1,\beta}(B)_{12} &= \lim_{c \to 0^-} \frac{-i
\sqrt{a}}{\sqrt{|c|}} \cdot \frac{\lambda_+ - \lambda_-}{2}\\
&= \lim_{c \to 0^-} \frac{-i \sqrt{a}}{\sqrt{|c|}} \sqrt{a^2 + a |c|}
\frac{e^{i\beta \arctan(\sqrt{|c|/a})} - e^{i\beta
\arctan(-\sqrt{|c|/a})}}{2} \\
&= -i a \lim_{c \to 0^-} \frac{e^{i\beta \arctan(\sqrt{|c|/a})} -
e^{i\beta \arctan(-\sqrt{|c|/a})}}{2\sqrt{|c|/a}} \\
&= -i a \left.\frac{d}{dy} e^{i\beta \arctan(y)}\right|_{y = 0} = -i a
\left.e^{i\beta \arctan(y)} i \beta \frac{1}{1+y^2}\right|_{y=0} = a
\beta.
\end{align*}

\noindent As a consequence, if $\beta \geq 2$ and $a \in (1/\beta,
(\sqrt{5}-1)/2)$, then for $c < 0$ small enough, the $(1,2)$-entry of
$\Psi_{1,\beta}(B)$ is greater than $1$.
But then the minor of $\Psi_{1,\beta}^{[2]}[M(a,a,c)]$ obtained by
deleting the second row and column is negative, from which it follows
that $\Psi_{1,\beta}^{[2]}[M(a,a,c)] \not\in \bp_4(\C)$.
Therefore $\Psi_{1,\beta}^{[2]}[-]$ does not preserve positivity on
$\bp_4^{[2]}(\C)$ if $\beta \ne \pm 1$. As before, the case of general
$m,n \geq 2$ follows by padding with zeros. This concludes the proof.
\end{proof}

\begin{remark}
In the proof of part (1) of Theorem \ref{Tmain1}, we showed that $\det
f_\alpha^{[2]}[M] < 0$ for all $\alpha \in (0,\epsilon_M)$ for some
$\epsilon_M \in (0,1)$, with $M$ as in Equation \eqref{E44}. In fact,
numerical computations indicate that $\det f_\alpha^{[2]}[M] < 0$ for all
$\alpha \in (0,1)$; this would provide a ``universal" counterexample $M$
for the proof of part (1).
\end{remark}

\section{Powers preserving positivity for commuting blocks}\label{Scommute}

Recall that D.~Choudhury \cite{Dipa_proc} studied an interesting variant
of the problem considered in Section \ref{Sblock} - namely, which
blockwise powers $(H_{st})_{s,t=1}^n \mapsto (H_{st}^\alpha)$ preserve
positivity when all the $m \times m$ blocks $H_{st}$ commute and are
positive semidefinite. It was shown in \cite{Dipa_proc} that if $\alpha
\in \N \cup [mn-2,\infty)$ then the corresponding blockwise power
preserves positivity. We now demonstrate that the bound $mn-2$ can be
significantly improved. More precisely, we completely characterize the
powers preserving Loewner positivity in that setting. 

\begin{proof}[{\bf Proof of Theorem \ref{Tmain2}}]
The proof is a refinement of the argument in \cite[Theorem 5]{Dipa_proc}.
Let $H = (H_{st}) \in \bp_{mn}(\C)$ be as given. Since the blocks
$H_{st}$ commute, they are simultaneously diagonalizable, i.e., there
exists a $m \times m$ unitary matrix $U$ and diagonal matrices
$\Lambda_{st}$ such that $H_{st} = U \Lambda_{st} U^* \ \forall s,t$.
Letting $T := U^{\oplus n}$ and $\Lambda := (\Lambda_{st})$, we obtain
$H = T \Lambda T^{-1}$. Let $P$ be the permutation matrix such that 
\begin{equation}\label{Eperm}
P^{-1} \Lambda P = A_1 \oplus \dots \oplus A_m,
\end{equation} 

\noindent where $(A_{k})_{st} := (\Lambda_{st})_{kk}$ with $1 \leq k \leq
m$ and $1 \leq s,t \leq n$. Then $H = (TP) (A_1 \oplus \dots \oplus A_m)
(TP)^{-1}$. By assumption, $A_k \in \bp_n([0,\infty))\ \forall k$.
Moreover, since the entries of the matrices $A_k$ are the eigenvalues of
the blocks $H_{st}$, we have 
$(H_{st}^\alpha) = (TP) (A_1^{\circ \alpha} \oplus \dots \oplus
A_m^{\circ \alpha})(TP)^{-1}$.
Here $A^{\circ \alpha} := (a_{st}^\alpha)$ denotes the entrywise power of
$A = (a_{st})$. Since $A_k$ are $n \times n$ matrices, it follows
immediately by Theorem \ref{TFitzHorn} that $(H_{st}^\alpha) \in
\bp_{mn}(\C)$ if $\alpha \in \N \cup [n-2,\infty)$.

Now suppose $\alpha \in (0,n-2) \setminus \N$. Choose $\epsilon > 0$ such
that the matrix $A := (1 + \epsilon st)_{s,t=1}^n$ satisfies $A^{\circ
\alpha} \not\in \bp_n$ (see Theorem \ref{TFitzHorn}). Let 
\begin{equation}\label{Econst1}
\Lambda = (\Lambda_{st})_{s,t=1}^n := P A^{\oplus m} P^{-1},
\end{equation} 

\noindent where $P$ is the permutation matrix given in Equation
\eqref{Eperm} and $\Lambda_{st}$ are $m \times m$ diagonal matrices.
Define $H_{st} := \Lambda_{st}$. Then the matrices $H_{st}$ are Hermitian
positive semidefinite, as is the matrix $H = (H_{st})$, but
$(H_{st}^\alpha) = P (A^{\circ \alpha} \oplus \dots \oplus A^{\circ
\alpha}) P^{-1}$ is not positive semidefinite by construction of $A$.
This shows that the powers $\alpha \in (0,n-2) \setminus \N$ do not
preserve positivity when applied blockwise. 

Finally, suppose $\alpha < 0$. Let $A := I_{m \times m} + {\bf 1}_{m
\times m} \in \bp_m([1,2])$. Examining the leading principal $2 \times 2$
block of $A$, it follows that $A^{\circ \alpha} \not\in \bp_m$.
Repeating the same construction as in Equation \eqref{Econst1}, we
conclude that there exist commuting blocks $H_{st} := \Lambda_{st} \in
\bp_m(\C)$ such that $(H_{st}) \in \bp_{mn}(\C)$, but $(H_{st}^\alpha)
\not\in \bp_m(\C)$ if $\alpha < 0$. This concludes the proof. 
\end{proof}

In Theorem \ref{Tmain2}, we assumed each block $H_{st}$ to be positive
semidefinite. This assumption was necessary for the powers
$H_{st}^\alpha$ to be well-defined. We now consider the case where the
blocks are not positive semidefinite. Using the functions $\phi_\alpha$
and $\psi_\alpha$, it is natural to extend the characterization provided
by Theorem \ref{Tmain2} to Hermitian blocks with arbitrary eigenvalues.
Using Theorem \ref{Tcrit}, we can now characterize the powers $\alpha$
such that $\phi_\alpha^{[m]}$ and $\psi_\alpha^{[m]}$ preserve positivity
when the blocks commute.

\begin{theorem}\label{Tmain3}
Let $\alpha \in \R \setminus \{0\}$ and $m,n \geq 2$. Then 
\begin{enumerate}
\item $\phi_\alpha^{[m]}[H] \in \bp_{mn}(\C)$ for all $m \times m$
Hermitian matrices $H_{st}$ such that $(H_{st}) \in \bp_{mn}(\C)$ and the
blocks $H_{st}$ commute if $\alpha \in 2\N \cup [n-2,\infty)$. If $\alpha
\not\in 2\N \cup [n-2,\infty)$, there exist real symmetric matrices
$H_{st}$ such that $(H_{st}) \in \bp_{mn}(\R)$, the blocks $H_{st}$
commute, but $\phi_\alpha^{[m]}[H] \not\in \bp_{mn}(\R)$. 

\item $\psi_\alpha^{[m]}[H] \in \bp_{mn}(\C)$ for all Hermitian $m \times
m$ matrices $H_{st}$ such that $(H_{st}) \in \bp_{mn}(\C)$ and the blocks
$H_{st}$ commute if $\alpha \in (-1+2\N) \cup [n-2,\infty)$. If $\alpha
\not\in (-1+2\N) \cup [n-2,\infty)$, there exist real symmetric matrices
$H_{st}$ such that $(H_{st}) \in \bp_{mn}(\R)$, the blocks $H_{st}$
commute, but $\psi_\alpha^{[m]}[H] \not\in \bp_{mn}(\R)$. 
\end{enumerate}
\end{theorem}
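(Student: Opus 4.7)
The plan is to adapt the simultaneous-diagonalization argument from the proof of Theorem \ref{Tmain2}, substituting Theorem \ref{Tcrit} in place of Theorem \ref{TFitzHorn}. The key observation is that $\phi_\alpha$ and $\psi_\alpha$ are multiplicative on $\R$, so the spectral calculus on each Hermitian block intertwines cleanly with the entrywise action on the auxiliary matrices $A_k$ arising from simultaneous diagonalization.

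For the sufficiency direction in part (1), since the Hermitian blocks $H_{st}$ pairwise commute, they are simultaneously diagonalizable by a common unitary matrix $U$, so $H_{st} = U \Lambda_{st} U^*$ with real diagonal matrices $\Lambda_{st}$. Setting $T := U^{\oplus n}$ and using the permutation matrix $P$ of Equation \eqref{Eperm}, I would write $H = (TP)(A_1 \oplus \cdots \oplus A_m)(TP)^{-1}$, where $(A_k)_{st} := (\Lambda_{st})_{kk}$. Since $TP$ is unitary and $H \in \bp_{mn}(\C)$, each $A_k$ is a real symmetric $n \times n$ matrix lying in $\bp_n(\R)$ (its entries are eigenvalues of Hermitian blocks, hence real). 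Multiplicativity of $\phi_\alpha$ yields $\phi_\alpha(H_{st}) = U \phi_\alpha(\Lambda_{st}) U^*$, and hence
\[ \phi_\alpha^{[m]}[H] = (TP)(\phi_\alpha[A_1] \oplus \cdots \oplus \phi_\alpha[A_m])(TP)^{-1}, \]
with the analogous identity for $\psi_\alpha$. The sufficiency in (1) (resp.~(2)) then follows immediately from Theorem \ref{Tcrit}, which guarantees $\phi_\alpha[A_k] \in \bp_n(\R)$ for $\alpha \in 2\N \cup [n-2,\infty)$ (resp.~$\psi_\alpha[A_k] \in \bp_n(\R)$ for $\alpha \in (-1+2\N) \cup [n-2,\infty)$).

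For the necessity direction, fix $\alpha \not\in 2\N \cup [n-2,\infty)$ in part (1). Theorem \ref{Tcrit} supplies a real symmetric matrix $A \in \bp_n(\R)$ (in fact of rank at most $2$) with $\phi_\alpha[A] \not\in \bp_n(\R)$. Mimicking Equation \eqref{Econst1}, I would set $\Lambda := P A^{\oplus m} P^{-1}$ and define the diagonal blocks $H_{st} := \Lambda_{st}$; these are real diagonal $m \times m$ matrices, hence automatically commute and are Hermitian, while $(H_{st})_{s,t=1}^n = \Lambda$ is similar via $P$ to $A^{\oplus m} \in \bp_{mn}(\R)$. Applying $\phi_\alpha$ spectrally to each diagonal block then gives $\phi_\alpha^{[m]}[H] = P(\phi_\alpha[A]^{\oplus m})P^{-1}$, which fails to be positive semidefinite by the choice of $A$. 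The analogous construction using the $\psi_\alpha$-counterexamples of Theorem \ref{Tcrit} handles part (2).

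No serious obstacle is anticipated: the heavy lifting (the sharp scalar characterization in Theorem \ref{Tcrit} and the permutation/diagonalization framework in the proof of Theorem \ref{Tmain2}) is already in place. The only step requiring brief verification is that spectral application of $\phi_\alpha$ or $\psi_\alpha$ to the simultaneously-diagonalized blocks commutes with the entrywise action on the $A_k$, which is transparent from multiplicativity together with the identity $(A_k)_{st} = (\Lambda_{st})_{kk}$.
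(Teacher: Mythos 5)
Your proof is correct and follows essentially the same route as the paper's: simultaneously diagonalize the commuting Hermitian blocks, conjugate by a permutation to get the block-diagonal form $H = (TP)(A_1 \oplus \cdots \oplus A_m)(TP)^{-1}$, then invoke Theorem~\ref{Tcrit} for sufficiency and the explicit construction of Equation~\eqref{Econst1} for necessity. One small remark: the identity $\phi_\alpha(H_{st}) = U\phi_\alpha(\Lambda_{st})U^*$ is simply the definition of the functional calculus on a Hermitian matrix, not a consequence of multiplicativity of $\phi_\alpha$; the multiplicativity plays no role here.
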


\begin{proof}
The proof is similar to the proof of Theorem \ref{Tmain2}. 
Let $U$ be a unitary matrix and $P$ be a permutation matrix such that
defining $H := (H_{st})$ and $T := U^{\oplus n}$,  we have
\begin{equation}\label{Efactor}
H =  (TP) (A_1 \oplus \dots \oplus A_m) (TP)^{-1}, 
\end{equation}

\noindent where $A_1, \dots, A_m$ are $n \times n$ matrices containing
the eigenvalues of the blocks $H_{st}$. If $f =
\phi_\alpha$ or $\psi_\alpha$, we have $f^{[m]}[H] = (TP) (f[A_1] \oplus \dots \oplus f[A_m]) (TP)^{-1}$. It follows from Theorem \ref{Tcrit} that
$\phi_\alpha^{[m]}[H] \in \bp_{mn}(\C)$ if $\alpha \in 2\N \cup
[n-2,\infty)$ and $\psi_\alpha^{[m]}[H] \in \bp_{mn}(\C)$ if $\alpha
\in (-1+2\N) \cup [n-2,\infty)$. Conversely, if $f = \phi_\alpha$ and
$\alpha \not\in 2\N \cup [n-2,\infty)$ or $f = \psi_\alpha$ and $\alpha
\not\in (-1+2\N) \cup [n-2,\infty)$, then by \cite[Theorem 2.5,
Proposition 6.2]{GKR-crit-2sided} there exists a matrix $A \in \bp_n$
such that $f[A] \not\in \bp_n$. Using the same construction as in
Equation \eqref{Econst1}, we conclude that $f^{[m]}[-]$ does not
preserve positivity. 
\end{proof}

\begin{remark}
We now address the case $\alpha = 0$, which was omitted from Theorem
\ref{Tmain3} for ease of exposition. We first claim that if $n = 2$ and
$H := (H_{st}) \in \bp_{2m}(\C)$ with Hermitian commuting blocks
$H_{st}$, then $\phi_0^{[m]}[H], \psi_0^{[m]}[H] \in \bp_{2m}(\C)$.
Indeed, as in Equation \eqref{Efactor}, the block matrix $H$ can be
factored as $H =  (TP) (A_1 \oplus \dots \oplus A_m) (TP)^{-1}$, where
$A_1, \dots, A_m\in \bp_2$. Moreover, $\phi_0, \psi_0$ preserve
positivity when applied entrywise to $\bp_2$, since the only possible
resulting matrices are ${\bf 0}_{2 \times 2}, {\bf 1}_{2 \times 2}$,
$I_{2 \times 2}$, and $\begin{pmatrix}1 & -1 \\ -1 & 1\end{pmatrix}$,
which are all positive semidefinite. However, when $n \geq 3$, we claim
that $\phi_\alpha^{[m]}[H], \psi_\alpha^{[m]}[H]$ are not always positive
semidefinite. Indeed, as in \cite[Equation 6.2]{GKR-crit-2sided}, define
\begin{equation}\label{Etop3}
A := \begin{pmatrix} 1 & 1/\sqrt{2} & 0\\ 1/\sqrt{2} & 1 &
1/\sqrt{2}\\ 0 & 1/\sqrt{2} & 1 \end{pmatrix} \oplus {\bf 0}_{(n-3)
\times (n-3)} \in \bp_n.
\end{equation}

\noindent One easily verifies that $\phi_0[A] = \psi_0[A] \not\in \bp_n$.
Using the same construction as in Equation \eqref{Econst1}, we conclude
that there exist commuting blocks $H_{st} := \Lambda_{st} \in \bp_m(\C)$
such that $H = (H_{st}) \in \bp_{mn}(\C)$, but $\phi_\alpha^{[m]}[H],
\psi_\alpha^{[m]}[H] \not\in \bp_{mn}(\C)$ when $\alpha = 0$. 
\end{remark}

\begin{remark}
An interesting consequence of Theorem \ref{Tmain3} is that when the
blocks commute, preserving positivity is in fact independent of the block
size $m$ (see part (2) of Theorem \ref{TdePillis}). This is in contrast
to Theorem \ref{Tmain1}, in which increasing the block size to $m \geq 2$
drastically reduces the set of powers preserving positivity, when the
commutativity assumption is omitted.
\end{remark}

\noindent {\bf Powers of the trace function.}
Problems similar to the ones above have been considered in the
literature, with the power function $H_{st} \mapsto H_{st}^\alpha$
replaced by other functions mapping $m \times m$ blocks to $p \times p$
matrices (see e.g.~\cite{Thompson_61, Marcus_Katz_69, depillis_69,
Marcus_watkins_71, Zhang_2012}). In particular, de Pillis
\cite{depillis_69} studies the map $(H_{st})_{s,t=1}^n \mapsto ({\rm
tr}(H_{st}))_{s,t=1}^n$ and demonstrates that it preserves positivity.
See also \cite{Zhang_2012} for a nice short proof of the same result. To
conclude this section, we extend de Pillis's result by characterizing the
values $\alpha \geq 0, \beta \in \Z$ such that $(H_{st}) \mapsto
(\Psi_{\alpha, \beta}(\tr(H_{st})))$ preserves positivity.

\begin{theorem}\label{TdePillis}
Fix $\alpha \geq 0$, $\beta \in \Z$, and $m,n \in \N$. Then the following
are equivalent: 
\begin{enumerate}
\item  $\Psi_{\alpha, \beta}[(\tr(H_{st}))_{s,t=1}^n] \in \bp_{n}(\C)$
for all $(H_{st})_{s,t=1}^n \in \bp_{mn}(\C)$.
\item $\Psi_{\alpha, \beta}[-]$ preserves positivity on $\bp_n(\C)$.
\item $\Psi_{\alpha, \beta}[(\tr(H_s^* H_t))_{s,t=1}^n] \in \bp_n(\C)$
for all $m \times m$ complex matrices $H_1, \dots, H_n$. 
\item $\Psi_{\alpha,\beta}^{[m]}[(H_{st})] \in \bp_{mn}(\C)$ if
$(H_{st})_{s,t=1}^n \in \bpm_{mn}(\C)$ and all blocks $H_{st}$ commute.
\end{enumerate}
\end{theorem}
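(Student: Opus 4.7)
The plan is to prove all four conditions are equivalent by taking (2) as the ``hub'' and establishing implications to and from it. The forward direction consists of three short arguments. For (2) $\Rightarrow$ (1) I would invoke de Pillis's theorem \cite{depillis_69}, which asserts that the map $(H_{st}) \mapsto (\tr(H_{st}))_{s,t=1}^n$ sends $\bp_{mn}(\C)$ into $\bp_n(\C)$; applying (2) entrywise to $(\tr(H_{st}))$ then yields (1). The implication (2) $\Rightarrow$ (3) is immediate, since $(\tr(H_s^* H_t))_{s,t=1}^n$ is the Gram matrix of $H_1,\ldots,H_n$ with respect to the Frobenius inner product and hence lies in $\bp_n(\C)$. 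For (2) $\Rightarrow$ (4), I would mimic the argument from the proof of Theorem \ref{Tmain2}, with one crucial extra ingredient: when the blocks of a Hermitian matrix $(H_{st})$ commute and are diagonalizable, each $H_{st}$ commutes with its adjoint $H_{st}^* = H_{ts}$ (which is in the same commuting family by Hermiticity), so each $H_{st}$ is automatically normal. Commuting normal matrices are simultaneously unitarily diagonalizable, so $H_{st} = U \Lambda_{st} U^*$ for a common unitary $U$; conjugating $(H_{st})$ by $I_n \otimes U$ and permuting rows/columns to group by eigencoordinate produces a matrix unitarily similar to a direct sum $A_1 \oplus \cdots \oplus A_m$, with each $A_k \in \bp_n(\C)$ collecting the $k$-th simultaneous eigenvalue of every block. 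Since functional calculus commutes with these unitary operations, $\Psi_{\alpha,\beta}^{[m]}[(H_{st})]$ is unitarily equivalent to $\Psi_{\alpha,\beta}[A_1] \oplus \cdots \oplus \Psi_{\alpha,\beta}[A_m]$, which is PSD by (2).

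For the reverse implications (1) $\Rightarrow$ (2) and (4) $\Rightarrow$ (2), a single uniform choice of test matrix handles both: given $A = (a_{st}) \in \bp_n(\C)$, set $H_{st} := a_{st} E_{11}$ where $E_{11}$ is the $m \times m$ rank-one projection onto the first standard basis vector. Then the blocks pairwise commute (they are all scalar multiples of $E_{11}$), one has $(H_{st}) = A \otimes E_{11} \in \bp_{mn}(\C)$, and direct spectral computations yield $\tr(H_{st}) = a_{st}$ and $\Psi_{\alpha,\beta}(H_{st}) = \Psi_{\alpha,\beta}(a_{st}) E_{11}$, so $\Psi_{\alpha,\beta}^{[m]}[(H_{st})] = \Psi_{\alpha,\beta}[A] \otimes E_{11}$. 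Applying (1) or (4) respectively forces $\Psi_{\alpha,\beta}[A] \in \bp_n(\C)$.

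The most delicate step, which I expect to be the main obstacle, is (3) $\Rightarrow$ (2). Given $A \in \bp_n(\C)$ of rank $r$, I would factor $A = L^* L$ with $L \in M_{r \times n}(\C)$ so that $A_{st} = \ell_s^* \ell_t$ for columns $\ell_s$ of $L$. Since $\tr(H_s^* H_t) = \langle \mathrm{vec}(H_s), \mathrm{vec}(H_t) \rangle_{\C^{m^2}}$, choosing any isometric embedding $\iota : \C^r \hookrightarrow \C^{m^2}$ and setting $\mathrm{vec}(H_s) := \iota(\ell_s)$ realizes $A_{st} = \tr(H_s^* H_t)$ with $H_s \in M_{m \times m}(\C)$; this embedding exists whenever $r \leq m^2$, covering the entire regime $n \leq m^2$. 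In the remaining regime $n > m^2$ the embedding is unavailable for matrices of maximal rank, and the key extra step will be a critical-rank argument in the spirit of Theorem \ref{Tcrit}: one shows that any positivity violation of $\Psi_{\alpha,\beta}$ on $\bp_n(\C)$ is already witnessed by a matrix of rank at most $m^2$ (for $m \geq 2$), so that (3) detects the failure and closes the equivalence.
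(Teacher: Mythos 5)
Your approach mirrors the paper's hub-and-spoke strategy around condition (2), and the implications $(1) \Leftrightarrow (2)$, $(2) \Rightarrow (3)$, and $(2) \Leftrightarrow (4)$ match the paper's arguments; your observation that each block must commute with its own adjoint and hence is normal, so that a common \emph{unitary} diagonalizer exists, is a detail the paper leaves implicit by deferring to the proof of Theorem \ref{Tmain2} (where blocks are already assumed Hermitian), and it is in fact needed here.

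Where you are genuinely more careful is $(3) \Rightarrow (2)$. The paper asserts that the Gram matrices $(\tr(H_s^* H_t))_{s,t=1}^n$ coincide with $\bp_n(\C)$, but Gram matrices of $n$ elements of the $m^2$-dimensional inner-product space $\C^{m\times m}$ are precisely the PSD matrices of rank at most $m^2$, so the paper's claim holds only when $m^2 \geq n$. Your critical-rank fix is the right repair: whenever (2) fails, a rank-$2$ witness already exists (the matrix in \eqref{Etop3} handles $\alpha < 1$, and the rank-$2$ constructions in the proofs of Theorem \ref{Tcrit} and Theorem \ref{Tnew} handle $\alpha \geq 1$), and $2 \leq m^2$ for $m \geq 2$, so (3) fails too. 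Your caveat ``(for $m \geq 2$)'' is not cosmetic: for $m = 1$ the matrices in (3) have rank at most one, and since $\Psi_{\alpha,\beta}$ with $\beta \in \Z$ is multiplicative and conjugation-equivariant it sends $vv^*$ to $\Psi_{\alpha,\beta}[v]\,\Psi_{\alpha,\beta}[v]^*$, so (3) holds for \emph{every} $\alpha \geq 0$, $\beta \in \Z$. Meanwhile (2) certainly fails for, say, $m = 1$, $n = 3$, $\alpha = 1/2$, $\beta = 0$. So the equivalence as stated is false at $m = 1$, $n \geq 3$; this gap is present in the paper's own proof as well, and the statement should either exclude $m=1$ or restrict to $m^2 \geq n$ (in which case the paper's direct argument suffices and your rank-reduction step is unnecessary).
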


\begin{proof}
Suppose first $(1)$ holds and let $A = (a_{st})_{s,t=1}^n \in \bp_n(\C)$.
Define $H_{st} \in \bp_m(\C)$ by $(H_{st})_{qr} := a_{st}$ if $q=r=1$ and
$0$ otherwise. Then $(H_{st})_{s,t=1}^n \in \bp_{mn}(\C)$, so
$\Psi_{\alpha, \beta}[A] \in \bp_n(\C)$ by (1). Thus $(1) \Rightarrow
(2)$. Conversely, if $(H_{st})_{s,t=1}^n \in \bp_{mn}(\C)$, then
$(\tr(H_{st}))_{s,t=1}^n \in \bp_n(\C)$ by \cite[Proposition
2.3]{depillis_69}, and $(2) \Rightarrow (1)$ follows immediately. Next,
$(2) \Leftrightarrow (3)$ because matrices of the form $(\tr(H_s^* H_t))$
are general Gram matrices in the inner product space $\C^{m \times m}$
with $\langle A, B \rangle := \tr(A^*B)$, so that the set of such
matrices coincides with $\bp_n(\C)$.
Finally, that $(2) \Leftrightarrow (4)$ follows by simultaneously
diagonalizing the blocks $H_{st}$ and proceeding as in the proof of
Theorem \ref{Tmain2}.  
\end{proof}

Note that when $\beta$ is even or odd, the function $\Psi_{\alpha,
\beta}$ reduces on $\R$ to $\phi_\alpha$ and $\psi_\alpha$ respectively.
Thus the powers $\alpha$ such that $\phi_\alpha[-]$ or $\psi_\alpha[-]$
preserves positivity on $\bp_n(\R)$ in Theorem \ref{TdePillis} are known
(see Theorem \ref{Tcrit}). In the next section, we explore the general
problem of characterizing the values $\alpha, \beta$ for which
$\Psi_{\alpha, \beta}[-]$ preserves Loewner positivity on $\bp_n(\C)$.

\section{Entrywise powers preserving positivity on Hermitian
matrices}\label{Sentrywise}

This section is devoted to proving Theorem \ref{Tnew}. As the proof is
long and intricate, we show the $n=3$ case in Section \ref{Sn=3}, and
then the general case in Section \ref{Sn_general}.

\subsection{Preserving positivity on Hermitian matrices of order
3}\label{Sn=3}

Note that for $n=1,2$, all maps $\Psi_{\alpha,\beta}$ preserve positivity
when applied entrywise to every matrix in $\bp_n(\C)$. In this subsection
we focus on the $n=3$ case. We begin by identifying a smaller sub-family
of matrices which it suffices to consider when verifying whether or not
$\Psi_{\alpha,\beta}$ preserves Loewner positivity.

\begin{lemma}\label{L3x3}
For $j=1,2,3$, suppose $r_j > 0, s_j \geq 0, t_j \in \R, \theta_j, \theta
\in (-\pi,\pi]$, and define ${\bf t} := (t_1, t_2, t_3)$. Now define:
\begin{equation}\label{EAp}
 A := \begin{pmatrix}
r_1 & s_3 e^{i \theta_3} & s_2 e^{i \theta_2} \\
s_3 e^{-i \theta_3} & r_2 & s_1 e^{i \theta 1} \\
s_2 e^{-i \theta_2} & s_1 e^{-i \theta_1} & r_3
\end{pmatrix}, \qquad
T({\bf t}, \theta) := \begin{pmatrix}
1 & t_3 & t_2 e^{i\theta} \\
t_3 & 1 & t_1 \\
t_2 e^{-i\theta} & t_1 & 1
\end{pmatrix}.
\end{equation}

\noindent Then the following are equivalent:  
\begin{enumerate}
\item $A \in \bp_3(\C)$; 
\item $T({\bf t},\theta) \in \bp_3(\C)$, where $t_j := \frac{s_j
\sqrt{r_j}}{\sqrt{r_1 r_2 r_3}}$ for $j = 1,2,3$, and $\theta = \theta_1
+ \theta_3 - \theta_2$. 
\item Given $t_j := \frac{s_j \sqrt{r_j}}{\sqrt{r_1 r_2 r_3}}$, we have
$t_j \in [0,1]$ for $j=1,2,3$, and $\det T({\bf t},\theta) = 1 -
\sum_{j=1}^3 t_j^2 + 2 t_1 t_2 t_3 \cos \theta \geq 0$. 
\end{enumerate}
\end{lemma}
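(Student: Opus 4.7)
The plan is to establish the chain $(1) \Leftrightarrow (2) \Leftrightarrow (3)$, by first reducing any Hermitian matrix of the form $A$ to the canonical shape $T(\mathbf{t},\theta)$ via a diagonal $*$-congruence, and then applying the standard principal-minor test for positive semidefiniteness to that canonical form.

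For $(1) \Leftrightarrow (2)$, I would first conjugate $A$ by the positive diagonal matrix $D := \diag(r_1^{-1/2}, r_2^{-1/2}, r_3^{-1/2})$; this $*$-congruence preserves positive semidefiniteness in both directions and normalizes the diagonal entries of $A$ to $1$, while scaling the off-diagonal moduli $s_j$ to exactly $t_j = s_j\sqrt{r_j}/\sqrt{r_1 r_2 r_3}$. Next I would conjugate by a unit-modulus diagonal matrix $E := \diag(e^{i\alpha_1}, e^{i\alpha_2}, e^{i\alpha_3})$; imposing two linear conditions on the $\alpha_j$ lets me kill the phases on the $(1,2)$ and $(2,3)$ entries, leaving a residual phase of $\pm(\theta_1 + \theta_3 - \theta_2)$ on the $(1,3)$ entry. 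Since a Hermitian matrix and its complex conjugate are positive semidefinite together, the sign of this residual phase is immaterial, and the resulting matrix is exactly $T(\mathbf{t}, \theta)$. As both $D$ and $E$ are invertible, this chain of $*$-congruences works in both directions and yields $(1) \Leftrightarrow (2)$.

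For $(2) \Leftrightarrow (3)$, I would invoke the standard fact that a Hermitian matrix is positive semidefinite if and only if every principal minor is non-negative. Applied to $T(\mathbf{t}, \theta)$, the three $1 \times 1$ minors are each $1$; the three $2 \times 2$ principal minors are $1 - t_j^2$ for $j=1,2,3$, which together with the non-negativity of each $t_j$ (built into the definition) give precisely the condition $t_j \in [0,1]$; and a direct cofactor expansion along the first row produces $\det T(\mathbf{t},\theta) = 1 - t_1^2 - t_2^2 - t_3^2 + 2 t_1 t_2 t_3 \cos\theta$, matching (3). No step presents a conceptual obstacle; the only delicate piece is the phase bookkeeping in the $*$-congruence step, and the sign convention for $\theta$ is settled by the invariance of positivity under complex conjugation.
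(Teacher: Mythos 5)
Your proof is correct and follows essentially the same route as the paper's: diagonal $*$-congruence by $D=\diag(r_1^{-1/2},r_2^{-1/2},r_3^{-1/2})$ followed by the all-principal-minors test on the resulting canonical form. The paper is slightly terser --- it simply observes that the principal minors of $DAD$ already coincide with those of $T(\mathbf{t},\theta)$, rather than explicitly introducing the phase diagonal $E$ to literally rotate $DAD$ into $T(\mathbf{t},\pm\theta)$ --- but this is a presentational difference, not a different argument.
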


\begin{proof}
Define $D:= \diag(r_1^{-1/2},r_2^{-1/2},r_3^{-1/2})$. That $(1)
\Leftrightarrow (2)$ follows from the fact that the principal minors of
$T({\bf t},\theta)$ are equal to the corresponding principal minors of
$DAD$, and hence are obtained from the principal minors of $A$ by
rescaling by positive factors. That $(2) \Leftrightarrow (3)$ is obvious. 
\end{proof}

The following corollary to Lemma \ref{L3x3} helps simplify the task of
ascertaining if an entrywise power function $\Psi_{\alpha,\beta}$
preserves Loewner positivity.

\begin{corollary}\label{Csp3}
Let $n \geq 3$, $\alpha \in \R$, and $\beta \in \Z$. Then
$\Psi_{\alpha,\beta}[-]$ preserves positivity on $\bp_3(\C)$ if and only
if $T({\bf t^{\circ \alpha}},\beta \theta) \in \bp_3(\C)$ for every ${\bf
t} \in [0,1]^3$ and $\theta \in (-3\pi, 3\pi)$ such that $\det T({\bf t},
\theta) \geq 0$. 
\end{corollary}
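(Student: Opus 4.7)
The plan is a normal-form reduction. By Lemma \ref{L3x3}, every $A \in \bp_3(\C)$ with positive diagonal entries $r_1, r_2, r_3$ is congruent via an invertible diagonal matrix to $T({\bf t}, -\theta)$, with $t_j := s_j \sqrt{r_j}/\sqrt{r_1 r_2 r_3} \in [0,1]$ and $\theta := \theta_1 + \theta_3 - \theta_2 \in (-3\pi, 3\pi]$. The key observation driving the proof is that entrywise application of $\Psi_{\alpha,\beta}$ intertwines with conjugation by invertible diagonal matrices: multiplicativity of $\Psi_{\alpha,\beta}$ (and $\beta \in \Z$) gives
\[ \Psi_{\alpha,\beta}[DUMU^*D] = D^{\circ\alpha}\, U^{\circ\beta}\, \Psi_{\alpha,\beta}[M]\, (U^{\circ\beta})^*\, D^{\circ\alpha}, \]
for any real positive diagonal $D$ and any diagonal unitary $U$, where $D^{\circ\alpha}$ and $U^{\circ\beta}$ denote the entrywise powers of the diagonals.

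The forward direction is then immediate: given $({\bf t}, \theta)$ with $\det T({\bf t}, \theta) \geq 0$, Lemma \ref{L3x3} yields $T({\bf t}, \theta) \in \bp_3(\C)$, and a direct entrywise computation gives $\Psi_{\alpha,\beta}[T({\bf t}, \theta)] = T({\bf t^{\circ\alpha}}, \beta\theta)$, which is PSD by assumption. For the converse, I would take $A \in \bp_3(\C)$ and split into two cases. If some diagonal entry $r_j$ vanishes, then the $j$-th row and column of $A$, and hence of $\Psi_{\alpha,\beta}[A]$, vanish, and the remaining $2 \times 2$ principal submatrix of $\Psi_{\alpha,\beta}[A]$ arises as a $T$-configuration with two of the $t_i$ set to $0$; the hypothesis applied to this configuration delivers the required positivity. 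If all $r_j > 0$, I would write $A$ as a diagonal congruence of $T({\bf t}, -\theta)$, apply the intertwining identity to obtain that $\Psi_{\alpha,\beta}[A]$ is a diagonal congruence of $T({\bf t^{\circ\alpha}}, -\beta\theta) = \overline{T({\bf t^{\circ\alpha}}, \beta\theta)}$, and invoke the hypothesis on the latter.

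The main technical point to verify is the intertwining identity, which is routine modulo two subtleties. First, the phase $\theta$ need not lie in the fundamental range $(-\pi, \pi]$ on which $\Psi_{\alpha,\beta}$ is originally defined, so one must confirm that $\Psi_{\alpha,\beta}(r e^{i\theta}) = r^\alpha e^{i\beta\theta}$ holds for all $\theta \in (-3\pi, 3\pi)$; this is precisely where integrality of $\beta$ is used. Second, the degenerate cases in which some $r_j$ or $t_j$ vanishes must be reconciled with the convention $\Psi_{\alpha,\beta}(0) = 0$ and handled by continuity or by direct inspection. Beyond these checks the argument is careful bookkeeping, and no new analytic input is required.
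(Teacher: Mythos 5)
Your argument is correct and follows essentially the same route as the paper, which simply invokes Lemma \ref{L3x3} together with a brief disposal of the zero-diagonal case via preservation on $\bp_2(\C)$; you spell out the diagonal-congruence intertwining with $\Psi_{\alpha,\beta}$ that the paper leaves implicit. If anything your treatment of the degenerate case (feeding a $T$-configuration with two $t_i$ set to $0$ back into the hypothesis) is a bit more careful than the paper's bare claim that $\Psi_{\alpha,\beta}$ ``clearly'' preserves positivity on $\bp_2(\C)$, which would need the qualifier $\alpha\geq 0$.
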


\begin{proof}
Clearly $\Psi_{\alpha,\beta}$ preserves positivity on $\bp_2(\C)$, hence
on matrices $A \in \bp_3(\C)$ with at least one zero diagonal entry. For
all other matrices $A \in \bp_3(\C)$, we are now done by Lemma
\ref{L3x3}. 
\end{proof}

In order to prove our next result, we recall the notion of a generalized
Dirichlet polynomial. 

\begin{definition}
A {\it generalized Dirichlet polynomial} is a function $F : \R \to \R$
of the form
$\displaystyle F(x) = \sum_{j=1}^n a_j t_j^x$,
where $a_j, t_j, x \in \R$ and $t_1 > t_2 > \cdots > t_n > 0$.
\end{definition}

Given a sequence $(a_j)_{j=1}^n$, denote by $S[(a_j)]$ the number of sign
changes in the sequence after discarding all zero terms $a_j$. Also
define $A_j := a_1 + \cdots + a_j$ for all $1 \leq j \leq n$. Then
$S[(A_j)] \leq S[(a_j)]$. We now recall the following classical result
which extends Descartes' Rule of Signs to generalized Dirichlet
polynomials. 

\begin{theorem}[Descartes' Rule of Signs,
\cite{Jameson,Laguerre}]\label{Tdescartes}
Suppose $F(x) = \sum_{j=1}^n a_j t_j^x : \R \to \R$ is a generalized
Dirichlet polynomial (with $t_1 > \cdots > t_n > 0$ as above), and $A_j =
a_1 + \cdots + a_j$ for all $j$. Then $F$ has at most $S[(A_j)]$ positive
zeros, and at most $S[(a_j)]$ real zeros.
\end{theorem}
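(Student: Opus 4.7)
My plan is to prove both bounds by induction on $n$ combined with Rolle's theorem, with the positive-zero bound requiring a more delicate structural argument than the real-zero bound. The base case $n=1$ is trivial, since $F(x) = a_1 t_1^x$ has no real zeros whenever $a_1 \ne 0$.

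For the real-zero bound $\#\{F = 0\} \leq S[(a_j)]$, the inductive step starts by normalizing: set $G(x) := F(x)/t_n^x = \sum_{j=1}^n a_j (t_j/t_n)^x$, which has the same real zeros as $F$, and compute
\[
G'(x) \;=\; \sum_{j=1}^{n-1} a_j \log(t_j/t_n) \, (t_j/t_n)^x,
\]
a generalized Dirichlet polynomial in $n-1$ terms whose coefficients inherit the sign pattern of $(a_1,\ldots,a_{n-1})$ (because $\log(t_j/t_n) > 0$). By the inductive hypothesis, $G'$ has at most $S[(a_j)_{j=1}^{n-1}]$ real zeros, and Rolle's theorem then bounds the zeros of $F$ by $S[(a_j)_{j=1}^{n-1}] + 1$. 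When $a_n$ introduces a new sign change this already equals $S[(a_j)_{j=1}^n]$; otherwise I would close the gap by analyzing $G$ as $x \to -\infty$, where $G \to a_n$ and the dominant term of $G'$ shares the sign of $a_n$, producing an additional zero of $G'$ to the left of every zero of $G$ and recovering the sharp count.

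For the positive-zero bound $\#\{F = 0\} \cap (0,\infty) \leq S[(A_j)]$, the key structural tool is the Abel summation identity
\[
F(x) \;=\; A_n t_n^x + \sum_{j=1}^{n-1} A_j \bigl( t_j^x - t_{j+1}^x \bigr),
\]
which expresses $F$ on $(0,\infty)$ as a linear combination of the strictly positive functions $t_j^x - t_{j+1}^x$ (for $j < n$) and $t_n^x$, weighted by the partial sums $A_j$. This representation immediately shows that if all $A_j$ share a sign then $F$ has no positive zeros, matching $S[(A_j)] = 0$. To promote this observation into the sharp bound in general, I would recognize the spanning family $\{t_j^x - t_{j+1}^x\}_{j < n} \cup \{t_n^x\}$ as a Markov (extended Chebyshev) system on $(0,\infty)$ -- its Wronskians reducing to nonvanishing Vandermonde-type determinants in the exponents $\log t_j$ -- and then invoke the classical variation-diminishing theorem asserting that a nontrivial combination $\sum_j c_j \phi_j$ over such a system has at most $S[(c_j)]$ zeros in the common domain.

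The principal obstacle is this Chebyshev-system step for the positive-zero bound: the naive Rolle-plus-induction template that succeeds for the real-zero count overshoots by one whenever the new partial sum fails to create a sign change, and straightforward boundary analysis does not always recover the missing zero. The cleanest route around this is through the Markov-system framework sketched above; as an alternative, one can iterate the differential operators $D_j f := f' - (\log t_j) f$, which successively annihilate the bases $t_j$ and can be organized into a sequence of inductions that track the partial sums $A_j$ directly rather than the raw coefficients $a_j$.
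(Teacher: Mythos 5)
This theorem is quoted in the paper from the references \cite{Jameson,Laguerre}; the paper supplies no proof of its own, so there is nothing to compare your argument against other than the classical literature. Your treatment of the \emph{real}-zero bound is essentially the standard one and is sound: normalizing by $t_n^x$, differentiating, applying the inductive hypothesis to the resulting $(n-1)$-term Dirichlet polynomial, and using Rolle, together with your endpoint analysis at $-\infty$ (when $a_n$ does not create a new sign change, the slowest-decaying term of $G'$ inherits the sign of the last nonzero $a_j$, which then matches $\sgn G(-\infty)=\sgn a_n$, forcing an extra zero of $G'$ strictly to the left of — or at, with excess multiplicity — the first zero of $G$). That recovers $N \leq S[(a_j)]$ correctly, modulo the routine reduction to $a_1 a_n \neq 0$ and bookkeeping of multiplicities.

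The gap is in the \emph{positive}-zero bound. The Abel-summed representation and the base case $S[(A_j)]=0$ are fine, but the step where you ``invoke the classical variation-diminishing theorem'' for the family $\{t_j^x - t_{j+1}^x\}_{j<n}\cup\{t_n^x\}$ is not available at the cost you assign it. Being an (extended) Chebyshev or Markov system is \emph{not} sufficient for the Descartes-type bound $\#\{\text{zeros}\}\leq S[(c_j)]$; one needs the strictly stronger property of being a \emph{Descartes system} (sign-regularity of \emph{all} the minors $\det(\phi_{i_p}(x_q))$, not just the top-order Wronskian/Vandermonde). That the pure exponentials $\{t_j^x\}$ form a Descartes system on $\R$ is exactly the content of the real-zero bound you just proved — it is not an independent ``classical'' fact one can cite freely in this context. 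Establishing that the differenced system remains a Descartes system on $(0,\infty)$ is itself nontrivial: the change-of-basis matrix from $\{t_j^x\}$ to $\{\phi_j\}$ is bidiagonal with a $-1$ band and is not totally positive, so Cauchy--Binet does not immediately give sign-definiteness of all submatrices, and one has to argue monotonicity of ratios like $(t_i^x - t_{i+1}^x)/t_j^x$ by hand. In short, the assertion does all the work, and the proposal does not prove it. The alternative route you sketch via the operators $D_j f = f' - (\log t_j) f$ suffers the same difficulty: $D_c$ reduces term count and is compatible with Rolle, but the partial sums of $D_c F$ are $\tilde A_j = \sum_{i\le j} a_i(\log t_i - \log c)$, and no choice of $c$ is exhibited that provably drops $S[(\tilde A_j)]$ below $S[(A_j)]$ while retaining the Rolle zero count. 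A complete argument for the $S[(A_j)]$ bound — whether via total positivity of the kernel $e^{xu}$ and the integral representation $F(x)=A_n t_n^x + x\int A(u)e^{xu}\,du$, or via a correctly set up induction on $S[(A_j)]$ — needs to be supplied.
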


Before we fully classify the entrywise powers which preserve Loewner
positivity on $\bp_3(\C)$, we first show that $\Psi_{\alpha,\beta}$
preserves positivity on $\bp_3(\C)$ if $\alpha \geq \max(1,|\beta|)$. We
also prove that $\Psi_{\alpha,\beta}$ does not preserve positivity on
$\bp_n(\C)$ if $\beta \not\in \Z$. In Section \ref{Sn_general}, we will
prove that $\Psi_{\alpha,\beta}$ does not preserve positivity on
$\bp_n(\C)$ if $\alpha < \max(n-2, |\beta| + 2 \lfloor (\sqrt{8n+1}-5)/2
\rfloor)$, thus completing the classification when $n=3$. 

\begin{theorem}\label{Tzhan}
For $n=3$, the entrywise power function $\Psi_{\alpha,\beta}$ preserves
Loewner positivity on $\bp_n(\C)$ if $\beta \in \Z$ and $\alpha \geq
\max(1,|\beta|)$. Moreover, if $\beta \not\in \Z$, then
$\Psi_{\alpha,\beta}$ does not preserve Loewner positivity on
$\bp_n(\C)$. 
\end{theorem}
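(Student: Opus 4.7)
For the ``moreover'' direction, I would use the matrix $A := \begin{pmatrix} 2 & -1 & 0 \\ -1 & 2 & 0 \\ 0 & 0 & 1 \end{pmatrix} \in \bp_3(\R)$. Under the convention $\theta \in (-\pi,\pi]$ we have $-1 = 1\cdot e^{i\pi}$, so $\Psi_{\alpha,\beta}(-1) = e^{i\beta\pi}$; if $\beta \notin \Z$ this is not real, and both the $(1,2)$ and $(2,1)$ entries of $\Psi_{\alpha,\beta}[A]$ become $e^{i\beta\pi}$ instead of forming a complex-conjugate pair. Hence $\Psi_{\alpha,\beta}[A]$ fails to be Hermitian, so cannot lie in $\bp_3(\C)$.

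For the main assertion, the first step is to reduce via Corollary \ref{Csp3} to showing $\det T({\bf t}^{\circ\alpha},\beta\theta) \geq 0$ whenever ${\bf t} \in [0,1]^3$ and $\theta \in \R$ satisfy $\det T({\bf t},\theta) \geq 0$. By replacing $\beta$ by $-\beta$ (which takes the output to its complex conjugate and preserves positivity) we may assume $\beta \geq 0$. The three workhorses I plan to use are the Schur product theorem (giving $A^{\circ\beta} \in \bp_3(\C)$ for integer $\beta \geq 0$), Theorem \ref{TFitzHorn} in the case $n=3$ (giving $B^{\circ\gamma} \in \bp_3(\R)$ for any $B \in \bp_3([0,\infty))$ and $\gamma \geq 1$), and the preliminary fact that $|A| := (|a_{st}|) \in \bp_3(\R)$ for every $A \in \bp_3(\C)$. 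The last claim follows at the normalized level from
\[
\det T({\bf t},0) \;=\; \det T({\bf t},\theta) + 2t_1t_2t_3(1-\cos\theta) \;\geq\; \det T({\bf t},\theta) \;\geq\; 0,
\]
together with the trivial bounds $1 - t_j^2 \geq 0$ for the $2\times 2$ minors. The identity $\Psi_{\alpha,\beta}(z) = z^{\beta}\,|z|^{\alpha-\beta}$ then gives the Hadamard decomposition $\Psi_{\alpha,\beta}[A] = A^{\circ\beta} \circ |A|^{\circ(\alpha-\beta)}$, so whenever $\alpha - \beta \geq 1$ both factors are PSD and Schur's theorem concludes. This covers the case $\beta = 0$ entirely, as well as $\alpha \geq \beta+1$ for $\beta \geq 1$.

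It remains to treat $\alpha \in [\beta,\beta+1)$ with $\beta \geq 1$, in which $|A|^{\circ(\alpha-\beta)}$ need not be PSD. Here I would fix ${\bf t}$ and $\theta$ and apply Descartes' Rule of Signs (Theorem \ref{Tdescartes}) to the generalized Dirichlet polynomial
\[
F(\alpha) \;:=\; 1 - t_1^{2\alpha} - t_2^{2\alpha} - t_3^{2\alpha} + 2(t_1t_2t_3)^{\alpha}\cos(\beta\theta),
\]
whose bases lie among $\{1,t_1^2,t_2^2,t_3^2,t_1t_2t_3\}$. The Hadamard argument above, applied at integer exponents and combined with Schur applied to the PSD matrix $|A|$, already yields $F(\beta+k) \geq 0$ for every integer $k \geq 0$, and $F(\alpha)\to 1$ as $\alpha\to\infty$ in the generic case $\max_j t_j < 1$ (boundary cases reduce to smaller dimension). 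A careful sign-pattern analysis of the coefficients (ordered by decreasing base) combined with this discrete and asymptotic positivity data and Descartes' cap on the number of positive zeros of $F$ should then force $F(\alpha)\geq 0$ throughout $[\beta,\infty)$.

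The principal obstacle I anticipate lies in this last step. Assuming WLOG $t_1 \geq t_2 \geq t_3 > 0$, the base $t_1t_2t_3$ sits between $t_1^2$ and $t_2^2$ when $t_1t_3 \geq t_2$ (in which case the partial-sum sign sequence of $F$ has at most one change, and Descartes gives at most one positive zero, hence $F \geq 0$ on $[\beta,\infty)$), but it sits between $t_2^2$ and $t_3^2$ when $t_1t_3 < t_2$, and in that regime with $\cos(\beta\theta) > 1/2$ the partial-sum sign sequence can exhibit three sign changes. Descartes then allows up to three positive zeros of $F$, which is not by itself sufficient to exclude a negative dip of $F$ strictly between two consecutive integers $\geq \beta$. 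Closing this gap will likely require either a sharper root-counting argument exploiting the Chebyshev identity $\cos(\beta\theta) = T_{\beta}(\cos\theta)$ together with the hypothesis $\det T({\bf t},\theta) \geq 0$ to restrict the admissible values of $\cos(\beta\theta)$, or identifying further non-integer test values of $\alpha$ at which $F\geq 0$ can be verified directly.
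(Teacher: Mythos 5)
Your counterexample for the ``moreover'' direction is valid and in fact simpler than the paper's: the paper uses the rank-one circulant $A = vv^*$ with $v = (1, e^{-2\pi i/3}, e^{2\pi i/3})^T$, whose image under $\Psi_{\alpha,\beta}$ \emph{is} Hermitian, and then computes $\det \Psi_{\alpha,\beta}[A] = -2 + 2\cos(2\pi\beta) < 0$; you instead place a $-1$ off the diagonal and observe that $\Psi_{\alpha,\beta}$ fails to be conjugation-equivariant on the negative real axis when $\beta \notin \Z$, so the output is not even Hermitian. Both work. Your observation that $|A| \in \bp_3(\C)$ for every $A \in \bp_3(\C)$ (via $\det T({\bf t},0) = \det T({\bf t},\theta) + 2t_1t_2t_3(1-\cos\theta) \geq 0$) is correct, is genuinely not in the paper, and neatly disposes of $\beta = 0$ entirely and of $\alpha \geq |\beta| + 1$ for $\beta \neq 0$ via $\Psi_{\alpha,\beta}[A] = A^{\circ\beta}\circ|A|^{\circ(\alpha-\beta)}$, Schur, and Theorem~\ref{TFitzHorn}.

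However, the range $\alpha \in [|\beta|, |\beta|+1)$ with $|\beta| \geq 1$ is the crux of the theorem and you do not close it, as you acknowledge. The paper handles the whole range $\alpha \geq |\beta|$ at once via Descartes' rule applied to $g_\beta(\alpha) = 1 - \sum_j t_j^{2\alpha} + 2(t_1t_2t_3)^\alpha\cos(\beta\theta)$: it asserts the coefficient sequence in decreasing-base order is $1,-1,-1,-1,2\cos(\beta\theta)$ with partial sums $1, 0, -1, -2, -2 + 2\cos(\beta\theta)$, hence at most one positive zero; then $g_\beta(0) < 0$, $g_\beta(|\beta|) \geq 0$ (Schur), and $g_\beta(\infty) = 1$ force $g_\beta \geq 0$ on $[|\beta|,\infty)$. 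Your worry that the partial-sum sign count can jump to three is on target, but your casework contains an error: with $t_1 \geq t_2 \geq t_3$ in $(0,1)$ one always has $t_1t_3 < t_3 \leq t_2$, so the alternative ``$t_1t_2t_3$ sits between $t_1^2$ and $t_2^2$'' never occurs; likewise $t_1t_2t_3 < t_1^2$ always. The only live dichotomy is whether $t_1t_2 \gtrless t_3$: when $t_1t_2 > t_3$ the base $t_1t_2t_3$ lies strictly between $t_2^2$ and $t_3^2$, the coefficient order becomes $1,-1,-1,2\cos(\beta\theta),-1$, the partial sums become $1, 0, -1, -1+2\cos(\beta\theta), -2+2\cos(\beta\theta)$, and for $\cos(\beta\theta) > 1/2$ this has three sign changes. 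This is exactly the configuration the paper's stated sequence silently assumes away, so your instinct that ``a sharper root-counting argument'' is needed is correct, and your plan as written does not yet supply it. Until that subcase is disposed of --- either by showing it is incompatible with $\det T({\bf t},\theta) \geq 0$, or by producing additional test values of $\alpha$ at which $g_\beta \geq 0$ forces the extra roots below $|\beta|$ --- the proof is incomplete.
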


\begin{proof}
Suppose $\beta \in \Z$ and $\alpha \geq \max(|\beta|,1)$. By Corollary
\ref{Csp3}, it suffices to show that $\Psi_{\alpha,\beta}$ preserves
positivity on all matrices $T({\bf t},\theta) \in \bp_3(\C)$ of the form
\eqref{EAp}. Using Lemma \ref{L3x3}, this reduces to showing:
\begin{equation}\label{E3x3}
1 - \sum_{j=1}^3 t_j^2 + 2 t_1 t_2 t_3 \cos \theta \geq 0 \quad \implies
\quad g_\beta(\alpha) := 1 - \sum_{j=1}^3 t_j^{2 \alpha} + 2 (t_1 t_2
t_3)^\alpha \cos(\beta \theta) \geq 0.
\end{equation}

\noindent In \eqref{E3x3} we may assume without loss of generality that
$\beta > 0$. There are now three cases: first, if $t_j = 0$ for some $j$,
then Equation \eqref{E3x3} is easy to show. Next, suppose $t_j$ are all
nonzero and $\max_j t_j = 1$, say $t_1 = 1$. Then $g_1(1) = -t_2^2 -t_3^2
+ 2 t_2 t_3 \cos \theta \geq 0$ if and only if $t_2 = t_3$ and $\cos
\theta = 1$. But then $\theta = 0$ or $\pm 2 \pi$ and \eqref{E3x3} again
follows. The third case is if $t_j \in (0,1)\ \forall j$. In this case we
use Theorem \ref{Tdescartes}: the partial sums of the coefficients are
$1, 0, -1, -2, -2 + 2 \cos(\beta \theta)$, and hence the generalized
Dirichlet polynomial has at most one positive root. First suppose
$\theta$ is not an integer multiple of $2\pi/\beta$.  Note that
$g_\beta(0) = 1 -3 + 2\cos(\beta \theta) < 0$. Also, by the Schur product
theorem, $g_\beta(\beta) \geq 0$ since $\beta \in \N$. Thus, the
generalized Dirichlet polynomial $g_\beta$ has a unique root between $0$
and $\beta$.
It follows that $g_\beta(\alpha) \geq 0$ for all $\alpha \geq \beta$,
since $g_\beta(\alpha) \to 1$ as $\alpha \to \infty$. Finally, suppose
$\theta = 2\pi k/\beta$ for some $k \in \Z$. To show \eqref{E3x3}, note
that
\begin{equation}
1 - \sum_{j=1}^3 t_j^2 + 2 t_1 t_2 t_3 \cos \theta \geq 0 \quad \implies
\quad 1 - \sum_{j=1}^3 t_j^2 + 2 t_1 t_2 t_3 \geq 0.
\end{equation}

\noindent This implies that the real matrix $T({\bf t},0)$ as in Equation
\eqref{EAp} is positive semidefinite. Now \eqref{E3x3} follows by
applying Theorem \ref{TFitzHorn} to $T({\bf t},0)$, since $\alpha \geq
1$.

To conclude the proof, we now provide a ``universal'' example of a matrix
$A \in \bp_3(\C)$ such that $\Psi_{\alpha,\beta}[A \oplus {\bf 0}_{(n-3)
\times (n-3)}] \not\in \bp_n(\C)$ whenever $\beta \in \R \setminus \Z$.
Define
\begin{equation}
A = \begin{pmatrix}
1 & e^{2\pi i/3} & e^{-2\pi i/3} \\
e^{-2\pi i/3} & 1 & e^{2\pi i/3} \\
e^{2\pi i/3} & e^{-2\pi i/3} & 1
\end{pmatrix}.
\end{equation}

\noindent Clearly $A \in \bp_3(\C)$, but $\det
\Psi_{\alpha,\beta}[A] = -2 + 2 \cos(2\pi \beta)$, which is negative
precisely when $\beta \not\in \Z$. Thus $\Psi_{\alpha,\beta}$ does not
preserve positivity on $\bp_n(\C)$ when $\beta \not\in \Z$.
\end{proof}

\subsection{Bounds for arbitrary dimension $n$}\label{Sn_general}

We now prove Theorem \ref{Tnew}, which addresses the case of general $n
\geq 3$. The proof will use the following preliminary result, which
generalizes an idea from FitzGerald and Horn \cite[Theorem
2.2]{FitzHorn}. 

\begin{proposition}\label{P01}
Let $\alpha > 1$ and fix an integer $n \geq 3$. Suppose
$\Psi_{\alpha-1,1}[A] \in \bp_{n-1}(\C)$ for all $A \in \bp_{n-1}(\C)$.
Then $\Psi_{\alpha,0}[A] \in \bp_n(\C)$ for all $A \in \bp_n(\C)$. 
\end{proposition}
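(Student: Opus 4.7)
The plan is to mirror the integral argument of FitzGerald and Horn (used in Theorem \ref{TFitzHorn}) and adapt it to the complex entrywise setting by exploiting the identity
\[
\tfrac{d}{dt}\,|w + t(z-w)|^{\alpha} \;=\; \alpha\, \ree\bigl[(z-w)\,\overline{\Psi_{\alpha-1,1}(w + t(z-w))}\bigr] \qquad (z,w \in \C,\ \alpha > 1).
\]
This converts increments of $|\cdot|^{\alpha}$ into real parts involving the hypothesized positivity preserver $\Psi_{\alpha-1,1}$, and is the complex analogue of the fundamental calculus identity used in \cite{FitzHorn}.

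Given $A = (a_{ij}) \in \bp_{n}(\C)$, I would first reduce by continuity (replacing $A$ with $A + \epsilon I_n$ and sending $\epsilon \to 0^+$) to the case $a_{nn} > 0$. Writing $u$ for the last column of $A$, set $\Lambda := uu^{*}/a_{nn}$ and $B := A - \Lambda$. Then $\Lambda$ is rank-one positive semidefinite, and a block computation (equivalent to the Schur complement of $a_{nn}$ in $A$) shows that $B \in \bp_n(\C)$ with vanishing $n$-th row and column, so $B = B' \oplus {\bf 0}_{1 \times 1}$ for some $B' \in \bp_{n-1}(\C)$. Likewise the $(n-1)\times(n-1)$ principal submatrix $\Lambda' = u'u'^{*}/a_{nn}$ of $\Lambda$ is rank-one positive semidefinite. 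Integrating the identity above entrywise (with $w = \lambda_{ij}$, $z = a_{ij}$, so $z - w = b_{ij}$) yields the pointwise decomposition
\[
|a_{ij}|^{\alpha} \;=\; |\lambda_{ij}|^{\alpha} \;+\; \alpha \int_{0}^{1} \ree\bigl[b_{ij}\,\overline{\Psi_{\alpha-1,1}(\lambda_{ij} + t b_{ij})}\bigr] \, dt,
\]
valid for every $i,j$ (for $i = n$ or $j = n$ the formula is tautological since $\lambda_{nj} = a_{nj}$ and $b_{nj} = 0$).

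Next I would verify that each of the two $n \times n$ summands on the right is positive semidefinite. The constant term $(|\lambda_{ij}|^{\alpha})$ is rank one with Gram vector $v_{i} := |a_{in}|^{\alpha}/a_{nn}^{\alpha/2}$. For the integrand matrix, the vanishing of the $n$-th row and column of $B$ forces the entire $n \times n$ matrix to vanish there, so its positivity is equivalent to positivity of its $(n-1) \times (n-1)$ principal submatrix
\[
\alpha\, \ree\bigl(B' \circ \overline{\Psi_{\alpha-1,1}[\Lambda' + t B']}\bigr).
\]
Since $\Lambda' + tB' \in \bp_{n-1}(\C)$ for every $t \in [0,1]$, the hypothesis provides $\Psi_{\alpha-1,1}[\Lambda' + tB'] \in \bp_{n-1}(\C)$; its complex conjugate is then also positive semidefinite, and the Schur product theorem applied to $B'$ produces a positive semidefinite Hermitian matrix whose entrywise real part (equivalently, the average with its own complex conjugate) is again positive semidefinite. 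Integrating over $t \in [0,1]$ and adding the rank-one term then gives $\Psi_{\alpha,0}[A] = (|a_{ij}|^{\alpha}) \in \bp_{n}(\C)$.

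The main obstacle is engineering the decomposition $A = \Lambda + B$ so that $B$ simultaneously remains positive semidefinite and kills the $n$-th row and column of the integrand; this vanishing is precisely what reduces the positivity question inside the integral to an $(n-1)\times(n-1)$ problem where the hypothesis applies, and without it one would need to assume positivity preservation already on $\bp_n(\C)$, which would render the argument circular. The differentiation identity, the Schur-product manipulations, and the stability of positivity under entrywise real parts of Hermitian matrices are routine once this reduction is in place.
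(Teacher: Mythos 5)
Your proposal is correct and follows essentially the same route as the paper: the same derivative identity converting $\tfrac{d}{dt}|z_t|^\alpha$ into $\alpha\,\ree$ of a $\Psi_{\alpha-1,1}$ expression, the same FitzGerald--Horn rank-one subtraction $A - \zeta\zeta^*$ to annihilate the last row and column, the same entrywise integral representation, and the same reduction to an $(n-1)\times(n-1)$ Schur product argument. The minor cosmetic differences (taking the real part of $B'\circ\overline{P}$ rather than $P\circ\overline{B'}$, and using $A+\epsilon I$ rather than the $\zeta={\bf 0}$ convention to dispose of $a_{nn}=0$) are immaterial.
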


\begin{proof}
Suppose $\Psi_{\alpha-1,1}[A] \in \bp_{n-1}(\C)$ for all $A \in
\bp_{n-1}(\C)$. Fix $z = z_1 + z_2 i,w = w_1 + w_2 i \in \C$, where
$z_1,z_2,w_1,w_2 \in \R$, and denote by $z_\lambda := \lambda z +
(1-\lambda)w$. Then 
\begin{align*}
\frac{d}{d\lambda} \Psi_{\alpha,0}(z_\lambda) = &\ \frac{\alpha}{2}
\Psi_{\alpha-2,0}(z_\lambda) \left[2(\lambda z_1 +
(1-\lambda)w_1)(z_1-w_1) + 2(\lambda z_2 +
(1-\lambda)w_2)(z_2-w_2)\right] \notag \\
= &\ \alpha \Psi_{\alpha-2,0}(z_\lambda) \ree(z_\lambda \overline{z-w}) =
\ \alpha \ree(\Psi_{\alpha-2,0}(z_\lambda) z_\lambda \overline{z-w}) \\
&= \alpha \ree(\Psi_{\alpha-1,1}(z_\lambda) \overline{z-w}). 
\end{align*}

\noindent We now proceed as in the proof of \cite[Theorem 2.2]{FitzHorn}.
Note that 
\begin{equation}
\Psi_{\alpha, 0}(z) = \Psi_{\alpha,0}(w) + \int_0^1 \frac{d}{d\lambda}
\Psi_{\alpha,0}(z_\lambda)\ d\lambda \ = \Psi_{\alpha,0}(w) + \alpha
\int_0^1 \ree(\Psi_{\alpha-1,1}(z_\lambda) \overline{z-w})\ d\lambda.
\label{eqn:horn_int}
\end{equation}

\noindent Now let $A \in \bp_n(\C)$ and let $\zeta := (a_{1n}, a_{2n},
\dots, a_{nn})^T /a_{nn}^{1/2}$ if $a_{nn} \ne 0$ and $\zeta := {\bf
0}_{n \times 1}$ otherwise. By \cite[Lemma 2.1]{FitzHorn}, the matrix $A
- \zeta \zeta^* \in \bp_n(\C)$. Also, note that the entries of the last
row and column of $A - \zeta \zeta^*$ are zero. Applying
\eqref{eqn:horn_int} entrywise, we obtain that 
\begin{equation}\label{eqn:horn_int2}
\Psi_{\alpha, 0}[A] = \Psi_{\alpha, 0}[\zeta \zeta^*] + \alpha \int_0^1
\ree\left(\Psi_{\alpha-1,1}[\lambda A + (1-\lambda) \zeta \zeta^*] \circ
\overline{A-\zeta \zeta^*}\right)\ d\lambda. 
\end{equation}

\noindent Note that the Schur product $\Psi_{\alpha-1,1}[\lambda A +
(1-\lambda) \zeta \zeta^*] \circ \overline{A-\zeta \zeta^*}$ in the
integrand in Equation \eqref{eqn:horn_int2} is positive semidefinite by
hypothesis and the fact that the last row and column of $A - \zeta
\zeta^*$ are zero. It follows immediately that $\Psi_{\alpha, 0}[A] \in
\bp_n(\C)$. This concludes the proof. 
\end{proof}

We now have all the ingredients necessary to prove our last main result.

\begin{proof}[{\bf Proof of Theorem \ref{Tnew}}]\hfill

\noindent {\bf Proof of (1).}
Suppose first that $\beta \in \Z$ and $\alpha \in |\beta| -2 +2\N$, say
$\alpha = |\beta| + 2m$ with $m \geq 0$. Note that $A = (a_{st}) \in
\bp_n(\C)$ if and only if $\overline{A} := (\overline{a_{st}}) \in
\bp_n(\C)$. Then,
\begin{equation}
\Psi_{\alpha, \beta}[A] = \begin{cases}
\Psi_{2m,0}[A] = (A \circ \overline{A})^{\circ m}, & \textrm{ if } \beta
= 0, \\
\Psi_{2m + \beta,\beta}[A] = A^{\circ \beta} \circ (A \circ
\overline{A})^{\circ m}, & \textrm{ if } \beta > 0, \\
\Psi_{2m + |\beta|,\beta}[A] = \overline{A}^{\circ |\beta|} \circ (A
\circ \overline{A})^{\circ m} , & \textrm{ if } \beta < 0.
\end{cases}
\end{equation}

\noindent In all three cases, we obtain that $\Psi_{\alpha,\beta}[A] \in
\bp_n(\C)$ by the Schur product theorem.

Suppose instead $\beta \in \Z$ and $\alpha \geq \max(n-2, |\beta|+2n-6)$.
We claim that in that case, $\Psi_{\alpha,\beta}[-]$ also preserves
Loewner positivity on $\bp_n(\C)$. The proof is by induction on $n \geq
3$. For $n=3$ we are done by Theorem \ref{Tzhan}. Now suppose the
assertion holds for $n-1 \geq 3$. Then $\Psi_{\alpha,1}[-]$ preserves
Loewner positivity on $\bp_{n-1}(\C)$ for $\alpha \geq 2(n-1-3) + 1 =
2n-7$. Hence by Proposition \ref{P01}, $\Psi_{\alpha,0}[-]$ preserves
Loewner positivity on $\bp_n(\C)$ for $\alpha \geq 2n-7 + 1 = 2n-6$. Thus
if $\alpha \geq 2n-6 + |\beta|$ and $A \in \bp_n(\C)$, then
\[ \Psi_{\alpha,|\beta|}[A] = \Psi_{\alpha-|\beta|,0}[A] \circ A^{\circ
|\beta|}, \qquad \Psi_{\alpha,-|\beta|}[A] = \Psi_{\alpha-|\beta|,0}[A]
\circ \overline{A}^{\circ |\beta|}, \]

\noindent and these are both in $\bp_n(\C)$ by the Schur product theorem.
Therefore the claim is proved by induction.

\noindent {\bf Proof of (2).}
If $\beta \not\in \Z$, then Theorem \ref{Tzhan} shows that $\Psi_{\alpha,
\beta}$ does not preserve Loewner positivity on $\bp_n(\C)$. Thus assume
$\beta \in \Z$. If $\alpha < 1$, it is easy to see that $\Psi_{\alpha,
\beta}[-]$ does not preserve positivity on $\bp_n(\C)$ (see Equation
\eqref{Etop3}). It thus remains to prove that $\Psi_{\alpha,\beta}[-]$
does not preserve positivity on $\bp_n(\C)$ if $1 \leq \alpha < |\beta| +
2 \lfloor (\sqrt{8n+1}-5)/2 \rfloor$, but $\alpha - |\beta|$ is not a
nonnegative even integer. To show this statement, first note for each
integer $k \geq 0$ that
\[ \lfloor (\sqrt{8n+1}-5)/2 \rfloor \geq k \qquad \iff \qquad n \geq
\binom{k+3}{2}. \]

\noindent Thus, we first show the assertion for $n = \binom{k+3}{2}$,
from which it immediately follows for all $n > \binom{k+3}{2}$ by padding
with zeros. Moreover, it suffices to show that $\Psi_{\alpha,\beta}[-]$
does not preserve Loewner positivity on $\bp_n(\C)$ when $\alpha \in
(|\beta| + 2k-2, |\beta| + 2k)$, since the smaller values of $\alpha \in
(|\beta|, |\beta| + 2k) \setminus (\alpha - 2\Z)$ do not preserve
positivity on $\bp_n(\C)$ by considering lower values of $k$ (and then
padding by zeros).

Thus, suppose $n = \binom{k+3}{2}$ and $\alpha \in (|\beta| + 2k-2,
|\beta| + 2k)$. It suffices to show that $\Psi_{\alpha,\beta}[-]$ does
not preserve positivity on $\bp_n(\C)$. Since $\Psi_{\alpha,-\beta}[A] =
\overline{\Psi_{\alpha,\beta}[A]}$, we may assume $\beta \geq 0$. Now fix
$z \in \C^\times$ and consider the function $f : (-1/|z|,1/|z|) \to \C$,
given by:
\[ f(\epsilon) := \Psi_{\alpha,\beta}(1 + \epsilon z) = (1 + \epsilon
z)^{(\alpha + \beta)/2} (1 + \epsilon \overline{z})^{(\alpha-\beta)/2}.
\]

\noindent Defining $Z(\epsilon) := 1 + \epsilon z$, one has:
\[ \frac{df}{d \epsilon} = \frac{d \Psi_{\alpha,\beta}(Z(\epsilon))}{d
\epsilon} = \frac{\partial \Psi_{\alpha,\beta}}{\partial Z} \frac{dZ}{d
\epsilon} + \frac{\partial \Psi_{\alpha,\beta}}{\partial \overline{Z}}
\frac{d\overline{Z}}{d \epsilon}. \]

\noindent Repeatedly using this formula and the general Leibniz rule, we
obtain for any integer $l \geq 0$:
\begin{align*}
\frac{d^l f}{d \epsilon^l}(0) = &\ \sum_{j=0}^l \binom{l}{j}
\prod_{t=0}^{j-1} \left( \frac{\alpha+\beta}{2} - t \right)
\prod_{t=0}^{l-j-1} \left( \frac{\alpha-\beta}{2} - t \right) \cdot
\left. \frac{z^j \overline{z}^{l-j} f(\epsilon)}{(1 + \epsilon z)^j (1 +
\epsilon \overline{z})^{l-j}} \right|_{\epsilon=0}\\
= &\ \sum_{j=0}^l \binom{l}{j} \Psi_{l,l-2j}(z) \prod_{t=0}^{j-1} \left(
\frac{\alpha+\beta}{2} - t \right) \prod_{t=0}^{l-j-1} \left(
\frac{\alpha-\beta}{2} - t \right).
\end{align*}

\noindent Therefore by Taylor's theorem, as $\epsilon \to 0^+$ we have
\begin{align}
\Psi_{\alpha,\beta}(1 + \epsilon z) = &\ 1 + \sum_{l=1}^{k+1}
\sum_{j=0}^l \frac{c_{l,j} \epsilon^l}{l!} \Psi_{l,l-2j}(z) +
o(\epsilon^{k+2}),\\
\mbox{where} \quad c_{l,j} := &\ \binom{l}{j} \prod_{t=0}^{j-1} \left(
\frac{\alpha+\beta}{2} - t \right) \prod_{t=0}^{l-j-1} \left(
\frac{\alpha-\beta}{2} - t \right)\ \forall 1 \leq l \leq k+1,\ 0 \leq j
\leq l.\notag
\end{align}

\noindent Now consider the family of power functions $S_k := \{
\Psi_{l,l-2j} : 1 \leq l \leq k+1, 0 \leq j \leq l \} \cup \{ K \equiv
1\}$. Note that $S_k$ contains precisely $\binom{k+3}{2}$ functions,
which are linearly independent on $\C^n$ by Lemma \ref{LCmult}. Hence
there exists a vector $u_{k,n} \in \C^n$ such that
\begin{equation}\label{Echoiceofu}
\Psi_{k+1,k+1}[u_{k,n}] \notin {\rm span}_{\C} \{ h[u_{k,n}] : h \in S_k
\setminus \{ \Psi_{k+1,k+1} \} \}.
\end{equation}

\noindent Now define the matrix $A_\epsilon := {\bf 1}_{n \times n} +
\epsilon u_{k,n} u_{k,n}^* \in \bp_n(\C)$. Then,
\[ \Psi_{\alpha,\beta}[A_\epsilon] = {\bf 1}_{n \times n} +
\sum_{l=1}^{k+1} \frac{c_{l,j} \epsilon^l}{l!} \Psi_{l,l-2j}[u_{k,n}]
\Psi_{l,l-2j}[u_{k,n}]^* + o(\epsilon^{k+2}) C, \]

\noindent where $C_{n \times n}$ is a fixed matrix independent of
$\epsilon$. Moreover, there exists $v_{k,n} \in \C^n$ orthogonal to $\{
h[u_{k,n}] : h \in S_k \setminus \{ \Psi_{k+1,k+1} \} \}$, but not to
$\Psi_{k+1,k+1}[u_{k,n}]$. Now compute:
\begin{align*}
v_{k,n}^* \Psi_{\alpha,\beta}[A_\epsilon] v_{k,n} = &\ \frac{c_{k+1,0}
\epsilon^{k+1}}{(k+1)!} |v_{k,n}^* \Psi_{k+1,k+1}[u_{k,n}]|^2 +
o(\epsilon^{k+2}) v_{k,n}^* C v_{k,n}\\
= &\ \frac{|v_{k,n}^* \Psi_{k+1,k+1}[u_{k,n}]|^2}{2^{k+1}(k+1)!} \cdot
\epsilon^{k+1} \prod_{t=0}^k (\alpha - \beta - 2t) + o(\epsilon^{k+2})
v_{k,n}^* C v_{k,n}.
\end{align*}

\noindent Since $\alpha \in (\beta + 2k-2, \beta+2k)$, the first term
is negative, whence so is the entire expression for sufficiently small
$\epsilon > 0$. This shows that $\Psi_{\alpha,\beta}[-]$ does not
preserve Loewner positivity on $\bp_n(\C)$ if $\alpha \in (\beta+2k-2,
\beta+2k)$, which concludes the proof.
\end{proof}

\begin{remark}
Since $n \geq \binom{k+3}{2}$, we observe that the vector $u_{k,n} \in
\C^n$ satisfying \eqref{Echoiceofu} can in fact be chosen to have all its
entries in the complex disc $D(0,R)$ for any fixed $0 < R \leq \infty$.
Indeed, by Lemma \ref{LCmult}, the characters in the set $S_k$ are
linearly independent on $D(0,R)$. Thus there exists $u = u_{k,n} \in
D(0,R)^n$ such that the vectors $\{ h[u] : h \in S_k \}$ are linearly
independent.
\end{remark}

\bibliographystyle{plain}
\bibliography{biblio}

\end{document}